\renewcommand{\aa}{\mathbb{A}}
\newcommand{\cc}{\mathbb{C}}
\renewcommand{\O}{\mathcal{O}}
\newcommand{\rr}{\mathbb{R}}
\newcommand{\pp}{\mathbb{P}}
\newcommand{\zz}{\mathbb{Z}}
\renewcommand{\gg}{\mathbb{G}}
\renewcommand{\O}{\mathcal{O}}
\newcommand{\Hom}{\operatorname{Hom}}
\renewcommand{\tilde}{\widetilde}
\newcommand{\PGL}{\text{PGL}}
\newcommand{\Tr}{\operatorname{Tr}}
\newcommand{\Sym}{\operatorname{Sym}}
\newcommand{\ind}{\operatorname{ind}}
\newcommand{\sign}{\operatorname{sign}}
\newcommand{\GW}{\operatorname{GW}}
\newcommand{\qtype}{\operatorname{Qtype}}
\newcommand{\type}{\operatorname{type}}
\newcommand{\sgn}{\operatorname{sgn}}
\renewcommand{\bar}{\overline}
\newcommand{\del}{\partial}
\newtheorem{thm}{Theorem}[section]
\newtheorem{ithm}{Theorem}
\newtheorem{lem}[thm]{Lemma}
\newtheorem{iconj}[ithm]{Conjecture}
\newtheorem{cor}[thm]{Corollary}
\theoremstyle{definition}
\newtheorem{defin}[thm]{Definition}
\newtheorem{example}[thm]{Example}
\newtheorem{algo}[thm]{Algorithm}
\newtheorem{rem}[thm]{Remark}
\newcommand{\defi}[1]{\textsf{#1}} 
\title{An enriched count of the bitangents to a smooth plane quartic curve}
\author{Hannah Larson}
\author{Isabel Vogt}
\date{\today}
\begin{document}

\maketitle

\begin{abstract}
Recent work of Kass--Wickelgren gives an enriched count of the $27$ lines on a smooth cubic surface over arbitrary fields.  Their approach using $\aa^1$-enumerative geometry suggests that other classical enumerative problems should have similar enrichments, when the answer is computed as the degree of the Euler class of a relatively orientable vector bundle. Here, we consider the closely related problem of the $28$ bitangents to a smooth plane quartic. 
However, it turns out the relevant vector bundle is not relatively orientable and new ideas are needed to produce enriched counts. 
We introduce a fixed ``line at infinity," which leads to enriched counts of bitangents that depend on their geometry relative to the quartic and this distinguished line. 
\end{abstract}

\section{Introduction}
Let $k$ be a field of characteristic different from $2$.  Over $\bar{k}$, it is a beautiful and classical result of Jacobi \cite{jacobi} that for \emph{any} smooth plane quartic curve $Q \subset \pp^2_{\bar{k}}$, there exist exactly $28$ distinct lines $L \subset \pp^2_{\bar{k}}$ that are bitangent to $Q$.
The $28$ bitangent lines in $\pp^2_{\bar{k}}$ are intimately connected with the geometry of $Q$.  As $Q$ is a canonically-embedded genus $3$ curve, each bitangent gives an effective divisor $D$ on $Q$ such that $2D$ is linearly equivalent to the canonical divisor $K_Q$.  In this way, the $28$ bitangent lines correspond to the $28$ odd theta characteristics of the genus $3$ curve $Q$.  As a set, the bitangent lines in $\pp^2_{\bar{k}}$ also completely determine the curve \cite{caporaso_sernesi}. 

Over non-algebraically closed fields $k$, the situation is more subtle. For example, over $\rr$, Zeuthen proved that every smooth plane quartic has at least $4$ real bitangents \cite{salmon}, but depending upon the real topology of the quartic, it can have in total either 4, 8, 16, or 28 real bitangents.  The real bitangents play an important role in \cite{tropical}, which studies the representations of real plane quartic equations as sums of squares.

\setlength{\columnsep}{12pt}

\begin{wrapfigure}{r}{5.5cm}
\begin{center}
\includegraphics[width=5cm]{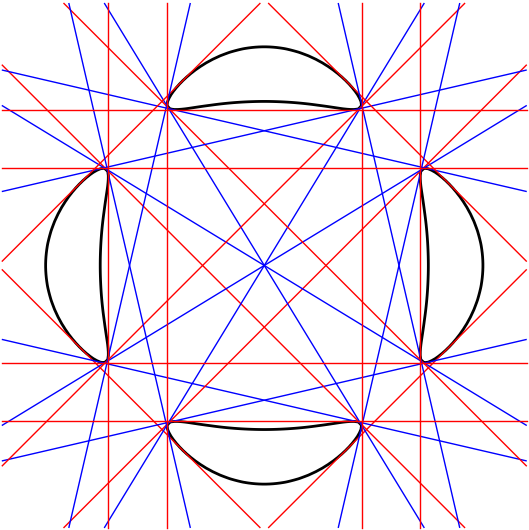} \\
The 28 real bitangents to the Trott curve colored by sign.
\end{center}
\end{wrapfigure}

\setlength{\columnsep}{5pt}

The $28$ bitangents of a smooth plane quartic are closely related to the $27$ lines on a smooth cubic surface.
Indeed, projection from a point $p$ not contained on a line on the cubic surface gives a degree $2$ map to $\pp^2$ branched over a plane quartic curve; the images of the $27$ lines and the tangent plane section at $p$ give the $28$ bitangents to this branch curve.  
Over $\rr$, a smooth cubic surface can have $3, 7, 15,$ or $27$ real lines. Segre observed that each real line can be given a sign that distinguishes its real geometry in the cubic (interpreted topologically in \cite{top});  
Finashin--Kharlamov \cite{FK} and Okonek--Teleman \cite{OT} prove that,
independent of the total number of real lines,
the corresponding signed count is always $3$. Given the intimate relationship between the $28$ bitangents to a plane quartic and these $27$ lines, it is natural to ask: Can we associate a sign to each bitangent that captures its real geometry relative to the quartic and gives rise to a constant signed count?
We present an answer to this question.

Our approach is to study the bitangents problem over arbitrary fields $k$ in the context of $\aa^1$-enumerative geometry.
Generalizing the signed count of Finashin--Kharlamov and Okonek--Teleman, Kass--Wickelgren give an enriched count of $27$ lines on a smooth cubic surface over $k$ valued in the Grothendieck-Witt group of $k$ \cite{kass_wickelgren}. This comes from an enrichment of the Euler class of a rank $4$ vector bundle on the Grassmannian $\gg(1, 3)$ of lines in $\pp^3$. Similarly, the classical count of $28$ bitangents can be found as the degree of the Euler class of a rank $4$ vector bundle on the space of lines $L$ in $\pp^2$ together with a degree $2$ subscheme $Z \subset L$.
 However, the this vector bundle is \emph{not} relatively orientable, and so the machinery of Kass--Wickelgren giving a constant enriched count breaks down. The bitangents problem therefore serves as a testing ground for using enrichment techniques on problems that are not relatively orientable. 

\vspace{.1in}
A first example of an enriched count
 is the signed count of real zeros of a polynomial over $\rr$, weighted by the sign of the derivative. 
A polynomial of even degree always has the same number of roots with positive sign as negative sign; therefore the overall signed count is always $0$.
When a polynomial has odd degree, the answer depends on the sign of the leading term of the polynomial. When it is positive, the overall signed sum is $+1$, and when it is negative, the overall signed sum is $-1$.

\begin{center}
  \begin{tikzpicture}
 \draw (0, 0) node [left] {$\rr$} -- (5, 0);
     \draw[blue] (4.3, 1.3) node{$f$};
  \draw (.7, 0) node {$\bullet$};
 \draw (1.6, 0) node {$\bullet$};
    \draw (3.8, 0) node {$\bullet$};
      \draw (.7, 0) node[above left] {\small $1$};
 \draw (1.6, 0) node[above right] {\small $-1$};
    \draw (3.8, 0) node[below right] {\small $1$};
 \draw[domain=.2:4.1,smooth,variable=\x,blue] plot ({\x},{.6*(\x-.7)*(\x-1.6)*(\x-3.8)});
 \end{tikzpicture}
 \hspace{.5in}
   \begin{tikzpicture}
 \draw (0, 0) node [left] {$\rr$} -- (5, 0);
     \draw[blue] (.6, 1.3) node{$f$};
  \draw (.7, 0) node {$\bullet$};
 \draw (1.6, 0) node {$\bullet$};
    \draw (3.8, 0) node {$\bullet$};
      \draw (.7, 0) node[below left] {\small $-1$};
 \draw (1.6, 0) node[below right] {\small $1$};
    \draw (3.8, 0) node[above right] {\small $-1$};
 \draw[domain=.2:4.1,smooth,variable=\x,blue] plot ({\x},{-.6*(\x-.7)*(\x-1.6)*(\x-3.8)});
 \end{tikzpicture}
\end{center}
The constant count in the even degree case is explained by the existence of a relative orientation for the relevant line bundle on $\pp^1$, which does not exist in the odd degree case (see Example \ref{poly}).
Nevertheless, the count in the odd degree case is constrained to a limited number of possible values, that imply, in particular, that any odd degree polynomial has at least one real root.

Our basic observation is that even if a vector bundle is not relatively orientable over the entire base, it always is away from a suitable divisor. 
This gives rise to the notion of ``relatively orientable relative to a divisor", which we explore in Section \ref{orient}.  We hope that the techniques developed there will be useful when studying other enumerative problems that lack relative orientations.  
After discussion with the authors, these ideas have already found application in the forthcoming work of McKean on enriching Bezout's Theorem when intersecting curves in $\pp^2$ whose degrees have the same parity \cite{McKean}.

For the bitangents problem, our divisor comes from a choice of line $L_\infty$ in $\pp_k^2$ and leads us to consider only quartics all of whose bitangent lines do not meet it along $L_\infty$. 
The space of such quartics is not $\aa^1$-connected, so the standard enrichment techniques do not give a constant count.  Over $\rr$, the space is not even connected in the real topology.  In Section \ref{trott} we provide examples attaining different counts over $\rr$.
Just as in the case of zeros of an odd degree polynomial, we observe that the (changing) count nevertheless contains meaningful geometric information and conjecture that it is constrained.
Finally, we relate our enriched counts of bitangents to the lines on a cubic surface by choosing $L_\infty$ to be one of the bitangents. Our construction then specializes to give a constant count of the $27$ remaining bitangents, which is equal to the Kass--Wickelgren count of lines on a cubic surface, over any ground field.

\subsection{The type of a bitangent}
When working over non-algebraically closed fields, by a \defi{line} in $\pp^2_k$ we mean a closed point $[L]$ of ${\pp_k^2}^\vee$.  We write $k(L)$ for the residue field of the point $[L]$. For an extension $K/k$, when we wish to specify a \emph{rational} point of ${\pp_K^2}^\vee$, we refer to the line as ``defined over $K$".

The type of a bitangent defined over $K$ will be an element of the Grothedieck-Witt ring $\GW(K)$. Given $a \in K^*/(K^*)^2$, we denote by $\langle a \rangle$ the equivalence class of the binary quadratic form $(x, y) \mapsto axy$. Over $\rr$, $\langle a \rangle$ depends only on the sign of $a$ and counts in $\GW(\rr)$ are the same as signed counts (assuming one knows the number of zeros over $\cc$).

Over $\rr$, the type of a bitangent relative to a fixed real line at infinity $L_\infty$ will measure the following geometric phenomenon. 
A line $L \neq L_\infty$ partitions $\aa^2_\rr = \pp^2_\rr \smallsetminus L_\infty$ into two connected components.  The affine equations for the quartic $Q$ and line $L$ allow us to choose a pair of consistent normal vectors to $Q$ at the points of bitangency with $L$.
If the two normal vectors 
lie in the same component, then $\qtype_{L_\infty}(L) = \langle 1 \rangle$. If they
 lie in different components, then $\qtype_{L_\infty}(L) = \langle -1\rangle$.
 
 \vspace{.1in}
  \begin{center}
 \begin{tikzpicture}
 \draw (0, 0) node [left] {$L$} -- (5, 0);
 \draw[red, thick, dashed] (4.5, -2)--(4.5, 2) node [above] {$L_\infty$};
  \draw (2, 0) node {$\bullet$};
 \draw (3, 0) node {$\bullet$};
 \draw[domain=1.75:3.25,smooth,variable=\x,blue] plot ({\x},{20*(\x-2)*(\x-2)*(\x-3)*(\x-3)});
 \draw [->] (2, 0) -- (2, -.75);
  \draw [->] (3, 0) -- (3, -.75);
 \draw (2.5, -2.5)  node {$\qtype_{L_\infty}(L) = \langle 1 \rangle$};
 \end{tikzpicture}
 \hspace{1in}
 \begin{tikzpicture}
 \draw (0, 0) node [left] {$L$} -- (5, 0);
 \draw [red, thick, dashed] (4.5, -2)--(4.5, 2) node [above] {$L_\infty$};
  \draw (2, 0) node {$\bullet$};
 \draw (3, 0) node {$\bullet$};
  \draw[domain=1.5:2.5,smooth,variable=\x,blue] plot ({\x},{7.7*(\x-2)*(\x-2)});
  \draw[domain=2.5:3.5,smooth,variable=\x,blue] plot ({\x},{-7.7*(\x-3)*(\x-3)});
 \draw [->] (2, 0) -- (2, -.75);
  \draw [->] (3, 0) -- (3, .75);
   \draw (2.5, -2.5)  node {$\qtype_{L_\infty}(L) = \langle -1 \rangle$};
 \end{tikzpicture}
 \end{center}
In the picture on page $1$, relative to the line $L_\infty = V(z)$, the $16$ red bitangent lines are type $\langle 1 \rangle$, 
and the $12$ blue bitangent lines are type $\langle -1 \rangle$.

A real bitangent is called \defi{split} if its points of tangency are defined over $\rr$.
Our $\qtype$ will always be $\langle 1 \rangle$ for non-split bitangents, and can be $\langle 1\rangle$ or $\langle -1 \rangle$ for split bitangents, depending on the relative geometry of the contact with the quartic and the line at infinity. 

\begin{rem}
In a different direction, Klein \cite{Klein} gives a constant signed count of flexes plus non-split bitangents:
\begin{align*}
8 &= \# \{\text{real flexes}\} + 2 \#\{\text{real non-split bitangents}\} \\
&=\#\{\text{real cusps of the dual curve}\} + 2 \#\{\text{real non-split nodes of dual curve}\}.
\end{align*}
Notice Klein's formula does not count split bitangents.
For a modern treatment see \cite{Ronga,Viro,Wall1} and \cite[Thm.~7.3.7]{Wall2}.
\end{rem}


 

 More generally,
given a line $L \subset \pp_K^2$ defined over $K$, write $\del_L$ for a derivation with respect to a linear form over $K$ vanishing along $L$; note that this is only well-defined up to multiplication by scalars in $K$. Suppose we have fixed a line $L_\infty$ defined over $k$, and we are given a homogeneous polynomial $f$ and a degree $2$ subscheme $Z = z_1+z_2 \subset L$ defined over $K$ such that $ Z \cap L_\infty = \emptyset$. By $\del_Lf(z_1)\cdot \del_L f(z_2)$ we mean to evaluate this quantity using some choice of $\partial_L$ and some choice of affine equation for $f$ on $\aa^2_{K} = \pp^2_{K} \smallsetminus L_\infty$. If the $z_i$ are defined over a quadratic extension $K'/K$, then $\del_L f(z_1)$ and $\partial_Lf(z_2)$ are elements of $K'$ that are Galois conjugate over $K$.  In any case, the product $\del_Lf(z_1)\cdot \del_L f(z_2)$ will be a well-defined element of $K/(K^*)^2$.
 
 
 
 \begin{defin} \label{qdef}
 Suppose that $f$ is a homogeneous degree $4$ polynomial in $k[y_1, y_2, y_3]$ defining a smooth plane quartic $V(f)$.  Let $L$ be a line with residue field $K$ that is bitangent to $V(f)$ with $2Z = V(f) \cap L$ and $Z \cap L_\infty = \emptyset$ for $Z = z_1 + z_2$ a degree $2$ divisor on $L$.  We define the \defi{type of the bitangent $L$ relative to $L_\infty$} to be
 \[\qtype_{L_\infty}(L) = \langle \del_Lf(z_1)\cdot \del_L f(z_2) \rangle \in \GW(K). \]
 By slight abuse of notation, given a closed point $L$ of ${\pp_k^2}^\vee$ with residue field $K$, we take $\qtype_{L_\infty}(L)$ to mean $\qtype_{L_\infty}(L')$ for $L'$ any line in the base change of $L_K$ (which is a Galois orbit of lines defined over $K$). Then $\Tr_{k(L)/k} \qtype_{L_\infty}(L)$ is a well-defined element of $\GW(k)$.
 \end{defin}

\subsection{Statement of results}
Failure of orientability manifests itself in the dependence of this type on the line at infinity. 
Nevertheless, when the ground field is $\rr$, we obtain constant signed counts for those quartics that do not meet the line at infinity over $\rr$.  The real points of such curves are compact quartics in the affine plane $\pp^2 \smallsetminus L_\infty$.

\begin{ithm} \label{cap}
Fix a line $L_\infty$ defined over $\rr$.  Let $Q$ be a smooth real plane quartic not meeting $L_\infty$ over $\rr$.  Then
\[\# \left({\text{real bitangents with}\atop \text{$\qtype_{L_\infty}(L) = \langle 1 \rangle$}} \right) - \# \left({\text{real bitangents with}\atop \text{$\qtype_{L_\infty}(L) = \langle -1 \rangle$}} \right) =4.\]
Therefore
\[\sum_{\text{lines } L \text{ bitangent to } Q} \Tr_{k(L)/k} \qtype_{L_\infty}(L) = 16\langle 1 \rangle  + 12 \langle -1 \rangle.\]
\end{ithm}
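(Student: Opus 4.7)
The plan is to establish the first equality (signed count $=4$); the second then follows formally. Since the complex count is $28$, and each Galois-conjugate pair of non-real bitangents has residue field $\cc$ (forcing $\qtype_{L_\infty} = \langle 1 \rangle \in \GW(\cc) \cong \zz$), every such pair contributes $\Tr_{\cc/\rr}\langle 1\rangle = \langle 1\rangle + \langle -1\rangle$ to the sum. Writing $r_\pm$ for the real bitangents of each type, the constraints $r_+ - r_- = 4$ and $r_+ + r_- + 2 \cdot (\text{non-real pairs}) = 28$ force the total to be $16\langle 1\rangle + 12 \langle -1\rangle$ regardless of how many real bitangents $Q$ has.

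For the signed count itself, I would realize bitangents of $Q$ as the zero locus of a section $\sigma_Q$ of a rank $4$ vector bundle $\E$ on the moduli space $M$ of pairs $(L, Z)$ with $L \subset \pp^2$ a line and $Z \subset L$ degree $2$. The bundle $\E$ is not relatively orientable globally, but the framework of Section~\ref{orient} provides a relative orientation away from the divisor $D = \{(L, Z) : Z \cap L_\infty \neq \emptyset\}$, under which the local contribution at a bitangent $L$ is exactly $\qtype_{L_\infty}(L)$ from Definition~\ref{qdef}. The hypothesis $Q(\rr) \cap L_\infty = \emptyset$ rules out real bitangents meeting $D$, so the signed Euler number is well-defined, and deformation invariance makes it locally constant on the real moduli space $\mathcal{M}$ of smooth real quartics with compact real locus.

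The main obstacle is that $\mathcal{M}$ is \emph{not} connected: its components correspond to the real topological types of compact smooth quartics (from empty real locus up to four disjoint ovals, with various nestings). I would verify the signed count is $4$ in each component by a baseline computation combined with a wall-crossing analysis. As baseline, take a smooth quartic with empty real locus: every real bitangent is automatically non-split (its two tangency points form a complex-conjugate pair), so $\qtype_{L_\infty} = \langle 1\rangle$ by the discussion in the introduction, and classically such a quartic has exactly four real bitangents, giving signed count $4$. To reach the other components, I would analyze deformations across codimension-$1$ walls where a real oval appears or disappears through a real isolated node: near each such wall, the affected real bitangents either become a complex-conjugate pair (contributing $0$ to the signature) or collide and exchange types with cancelling signs, so the signed count is preserved. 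Making this local wall-crossing cancellation rigorous is the technical heart of the argument; once established, the pictured Trott curve, where $16 - 12 = 4$ is visible, serves as a consistency check.
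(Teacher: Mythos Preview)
Your setup and the reduction of the second equality to the first are correct, and your identification of the local index with $\qtype_{L_\infty}$ matches the paper exactly. The gap is in your connectedness analysis. You work in the moduli space $\mathcal{M}$ of \emph{smooth} real quartics with compact real locus, correctly observe that this space is disconnected by topological type, and are then forced into a wall-crossing argument you yourself describe as ``the technical heart'' and leave unproved.

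The paper avoids this entirely by working in a larger space. Rather than smooth quartics, it considers the space $A$ of all real quartic \emph{polynomials} $f$ (including singular ones) with $V(f)\cap L_\infty$ containing no real points, and shows (Lemma~\ref{conn}) that $A$ has exactly two connected components, interchanged by $f\mapsto -f$. The key point is that one only needs to remove quartics with a positive-dimensional family of bitangents, i.e.\ those with a non-reduced component, and this locus has codimension at least $2$; nodal quartics are permitted, so all smooth topological types lie in the same component. Since $\qtype_{L_\infty}$ is visibly invariant under $f\mapsto -f$, the local constancy from Lemma~\ref{ouridea} immediately gives global constancy, and a single explicit computation---the paper uses the Fermat quartic $y_1^4+y_2^4+y_3^4$, whose four real bitangents are all non-split---finishes the proof. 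Your proposed wall-crossing at nodal degenerations is not wrong in spirit, but it amounts to re-deriving this connectedness by hand, wall by wall, when a direct argument is available.
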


\begin{rem}
Theorem \ref{cap} immediately implies that if the real points of $Q$ form a compact quartic in an affine plane $\rr^2$, then $Q$ has at least $4$ real bitangent lines.  By \cite{Reich}, such an affine plane exists for every smooth plane quartic; however, this proof presuposes the existence of one real bitangent. 
\end{rem}

%

We will see in Section \ref{vary} that moving the line at infinity can change the signed count.  Code available at \cite{code} computes all signed counts that can be realized for a fixed quartic by varying $L_\infty$.
Based on a randomized search of over $10000$ quartics --- using code from \cite{tropical} to generate quartics of each topological type --- we conjecture that the signed count is constrained to a certain range.

\begin{iconj}\label{conj_range}
Let $Q$ be a smooth plane quartic defined over $\rr$, and let $L_\infty \subset \pp^2_{\rr}$ be a line defined over $\rr$ such that $L \cap Q \cap L_\infty = \varnothing$ for all bitangents $L$. Then
\[\# \left({\text{real bitangents with}\atop \text{$\qtype_{L_\infty}(L) = \langle 1 \rangle$}} \right) - \# \left({\text{real bitangents with}\atop \text{$\qtype_{L_\infty}(L) = \langle -1 \rangle$}} \right) \in \{0,2,4,6,8\}.\]
\end{iconj}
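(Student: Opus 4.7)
My plan is to combine the compact case of Theorem~\ref{cap} with a wall-crossing analysis as $L_\infty$ varies. Fix a smooth real quartic $Q$ with $r\in\{4,8,16,28\}$ real bitangents and decompose the signed count as
\[ N(Q,L_\infty) \;=\; n \;+\; \sum_{L \text{ split real bitangent}} \sigma_L(L_\infty), \]
where $n$ is the number of non-split real bitangents (each contributing $\langle 1\rangle$, as noted after Definition~\ref{qdef}) and $\sigma_L(L_\infty)\in\{\pm 1\}$ records the sign of $\qtype_{L_\infty}(L)$ for a split bitangent $L$. By \cite{Reich} there exists a real line $L_\infty^{\mathrm{aff}}$ with $Q(\rr)\cap L_\infty^{\mathrm{aff}} = \varnothing$, and for this choice Theorem~\ref{cap} gives the base value $N(Q,L_\infty^{\mathrm{aff}}) = 4$.

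The key preliminary claim I would establish is that, for each split real bitangent $L$ with real tangency points $z_1,z_2$, the sign $\sigma_L(L_\infty)$ is locally constant on $(\rr\pp^2)^\vee$ away from the two pencils $\hat z_1,\hat z_2$ of lines through $z_1$ and $z_2$, and that it flips precisely when $L_\infty$ crosses one of these pencils. This should follow from a local computation from Definition~\ref{qdef}: as $L_\infty$ crosses $\hat z_i$, the linear form $\ell_\infty$ defining $L_\infty$ changes sign at $z_i$, so any choice of affine equation for $f$ picks up a corresponding sign flip in $\del_L f(z_i)$ while $\del_L f(z_{3-i})$ and all $\sigma_{L'}$ with $L'\ne L$ remain unchanged. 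Granting this local picture, any generic path from $L_\infty^{\mathrm{aff}}$ to an arbitrary admissible $L_\infty$ crosses finitely many walls, each contributing $\pm 2$ to $N$, so that $N(Q,L_\infty) = 4 + 2k$ for some $k\in\zz$. In particular this recovers the parity statement immediately.

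The conjecture is thus equivalent to the bound $|k|\le 2$, and establishing this bound is the main obstacle; a priori nothing prevents many bitangents from simultaneously switching type. A genuine topological constraint is needed, and a plausible route is to organize the split real bitangents by the combinatorics of their tangency points relative to the ovals of $Q(\rr)$: whether the two tangency points of $L$ lie on a single oval or on two distinct ovals controls the geometry of the wall arrangement $\hat z_1\cup \hat z_2$ in the dual plane and restricts which sign patterns $(\sigma_L)$ are jointly realized as $L_\infty$ varies within a single real component. Since a smooth real plane quartic has at most four ovals, one expects to bundle bitangents into a small number of types whose joint wall combinatorics is uniformly controlled. Combining this with Klein's formula (stated in the remark following Theorem~\ref{cap}) and the classical topological classification of real bitangent configurations, as in \cite{Ronga,Viro}, should yield $|k|\le 2$. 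Making this combinatorial analysis precise — in particular ruling out coordinated type flips across more than two bitangents relative to the compact reference position — is where the essential difficulty lies and explains why the paper states this only as a conjecture supported by large-scale experiment.
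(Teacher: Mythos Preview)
The statement you are attempting is labeled a \emph{conjecture} in the paper, and the paper offers no proof: its only evidence is the experimental search described just before the statement (over $10000$ random quartics, using Algorithm~\ref{algo:all_counts} from Section~\ref{vary} to compute all signed counts attained by a given $Q$). So there is no ``paper's own proof'' to compare against.

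Your proposal is likewise not a proof, and you candidly acknowledge this. The wall-crossing picture you set up --- non-split bitangents always contribute $\langle 1\rangle$, and the sign $\sigma_L(L_\infty)$ of a split bitangent flips exactly when $L_\infty$ passes through one of its tangency points --- is correct and is precisely the ``grate'' analysis of Section~\ref{vary} (see equation~\eqref{differences}). Combined with Theorem~\ref{cap} as a base point, this does establish the parity constraint $N\in 2\zz$ (which is in any case immediate from the fact that the total number of real bitangents lies in $\{4,8,16,28\}$). But the substantive content of the conjecture is the bound $N\in[0,8]$, and here your outline has a genuine gap: the appeal to oval combinatorics, Klein's formula, and the classification of real bitangent configurations is not an argument. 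Klein's formula constrains the number of non-split bitangents and flexes, not the sign pattern of split bitangents relative to a moving $L_\infty$; and nothing in your sketch rules out, say, four split bitangents whose grates can be simultaneously crossed in the same direction by some $L_\infty$, which would already push $|k|$ past $2$. Absent a concrete mechanism forcing the bound, the proposal stands at the same status as the paper itself: a conjecture with supporting heuristics.
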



Finally, over any ground field $k$, if the line at infinity is chosen to be \emph{one of the bitangents}, then the remaining $27$ have a constant enriched count relative to the distinguished one.

\begin{ithm} \label{main}
Let $Q$ be a smooth plane quartic defined over $k$ and let $L_\infty$ be a bitangent to $Q$ defined over $k$. Then
\[\sum_{\substack{\text{lines } L \text{ bitangent to } Q \\ L \neq L_\infty}} \Tr_{k(L)/k} \qtype_{L_\infty}(L) = 15\langle 1 \rangle + 12\langle -1 \rangle \in \GW(k).\]
\end{ithm}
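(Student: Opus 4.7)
The natural strategy is to deduce Theorem \ref{main} from the Kass--Wickelgren enriched count of the $27$ lines on a cubic surface, leveraging the classical correspondence between plane quartics with a distinguished bitangent and cubic surfaces with a distinguished point recalled in the introduction. First, I would construct from the pair $(Q, L_\infty)$ a smooth cubic surface $X \subset \pp^3_k$ together with a $k$-rational point $p \in X$ lying on no line of $X$. Concretely, the double cover $Y \to \pp^2$ branched along $Q$ is a degree $2$ del Pezzo surface over $k$; the preimage of $L_\infty$ is a pair of $(-1)$-curves swapped by the covering involution, and contracting one of them yields a cubic surface $X$ over $k$ (the two choices being related by a Galois twist which is trivial here because $L_\infty$ is $k$-rational and the two tangency points are already cut out as a degree $2$ divisor on $L_\infty$). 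The projection $\pi\colon X \dashrightarrow \pp^2$ from $p$, the image of the contracted curve, recovers $Q$ as the branch quartic and $L_\infty$ as $\pi(T_pX \cap X)$. This establishes a Galois-equivariant bijection $\ell \mapsto \pi(\ell)$ between lines on $X$ and bitangents $L \neq L_\infty$ to $Q$, preserving residue fields.

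The heart of the argument is to match the local invariants across this bijection. The Kass--Wickelgren count arises from the Euler class of $\Sym^3\S^\vee$ on the Grassmannian $\gg(1,3)$, attaching to each line $\ell\subset X$ a class $\type(\ell)\in\GW(k(\ell))$ given by the determinant of the Jacobian of the cubic-polynomial section. I would show that $\type(\ell) = \qtype_{L_\infty}(\pi(\ell))$ in $\GW(k(\ell))$. Working in affine coordinates adapted to the projection, one dehomogenizes so $p$ lies at the origin and $L_\infty$ is the plane at infinity; then a defining cubic $F$ for $X$ can be written, in a local coordinate along the fiber of $\pi$, as a degree-two polynomial whose discriminant is a scalar multiple of the affine defining equation $f$ of $Q$. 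A direct computation then expresses the product $\del_L f(z_1)\cdot \del_L f(z_2)$ at the two tangency points as the Jacobian of the Kass--Wickelgren section at $\ell$, computed with respect to a basis adapted to the two normal directions to $\ell$ inside $X$. All auxiliary scaling choices (affine equations, bases of tangent spaces, trivializations of the two vector bundles) enter squared, and therefore vanish in $\GW(k(\ell))$.

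Granted the type comparison, the theorem of Kass--Wickelgren \cite{kass_wickelgren} gives the identity
\[ \sum_{\ell \subset X} \Tr_{k(\ell)/k}\type(\ell) = 15\langle 1\rangle + 12\langle -1\rangle \in \GW(k), \]
and the bijection translates this directly into the claimed formula summed over bitangents $L \neq L_\infty$. The main obstacle is the type comparison: the classical dictionary between cubic surfaces-with-a-point and quartics-with-a-bitangent is standard, but matching the \emph{enriched} invariants demands a careful bookkeeping of trivializations and signs as one transfers between the Euler classes of the two rank-$4$ bundles through the projection construction. I would expect this comparison step to be the longest and most delicate part of the proof, occupying the bulk of the section, while the construction of $X$ and the application of the Kass--Wickelgren formula are relatively formal.
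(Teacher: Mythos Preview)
Your proposal is correct and follows essentially the same route as the paper: construct an associated pointed cubic $(V,p)$ from $(Q,L_\infty)$ via the degree~$2$ del Pezzo double cover, establish a residue-field-preserving bijection between lines on $V$ and bitangents $L\neq L_\infty$, prove the type comparison $\qtype_{L_\infty}(L)=\type(\tilde{L})$ by an explicit coordinate computation (the paper writes the branch quartic as $B^2-4AC$ and invokes \cite[Lemma~50]{kass_wickelgren} for the cubic side), and then apply the Kass--Wickelgren formula. One small point to tighten: the reason the preimage of $L_\infty$ in the double cover splits into two $k$-rational $(-1)$-curves is not merely that $L_\infty$ is $k$-rational, but that one must first choose the defining equation $f$ of $Q$ so that $f|_{L_\infty}$ is a square in $k[L_\infty]$ (a choice well-defined up to $(k^*)^2$); the paper makes this explicit.
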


\begin{rem}
Both theorems apply over $\rr$ when $L_\infty$ is a non-split bitangent. In this case, taking the trace in Theorem \ref{main} gives a signed count of $3$ for bitangents other than $L_\infty$. Non-spit bitangents always have type $\langle 1 \rangle$, so if $L_\infty$ is counted too then we recover Theorem \ref{cap}.
\end{rem}

\begin{rem}
During the preparation of this paper, V.~Kharlamov, R.~Rasdeaconu, and S.~Finashin informed us of a related signed count.  
Instead of real bitangents to a smooth plane quartic, they consider real lines on a real del Pezzo surface $Y$ that is the double cover of the projective plane branched over the quartic, so that each real bitangent is replaced by two such lines. Their signed count of real lines on $Y$ uses an appropriate $\mathrm{Pin}^-$ structure and, for instance, attributes opposite signs to two real lines covering the same real bitangent. Thus, the signed count of all the real lines on $Y$ is zero. However, the partial sum over all the real lines intersecting a fixed line with odd multiplicity (including itself) is equal to $\pm 4$ (which gives another explanation of the existence of at least 4 real bitangents to a smooth plane quartic).
\end{rem}

 \subsection*{Acknowledgements}
Thanks to Jesse Kass and Kirsten Wickelgren for many insightful conversations, comments on several drafts of this article, and for advising the $\aa^1$-enumerative geometry problem session at the the 2019 Arizona Winter School. We are grateful to the organizers, funders, and other participants --- in particular Ethan Cotterill, Ignacio Darago, and Changho Han --- of the Winter School for fostering the stimulating environment that inspired this work.

\section{Relative orientability relative to a divisor} \label{orient}
Many classical enumerative questions are solved by counting the zeros of sections of a vector bundle on a projective variety.
If a section $\sigma$ of a rank $n$ vector bundle $E$ on an $n$-dimensional smooth projective variety $X$ has isolated zeros, the degree of the top Chern class or Euler class $c_n(E) \in H^{2n}(X)$ gives the number of zeros of $\sigma$ over the algebraic closure, counted with multiplicity. 
Over non-algebraically closed fields, the number of zeros of a section need not be constant. Recall that a manifold $X$ is \defi{orientable} if $\det T_X \cong \O_X$ and a choice of isomorphism is called an \defi{orientation}. 
Given a vector field (i.e. a global section of $T_X$) with isolated zeros on an orientable real manifold, one can obtain a constant signed count of zeros using local indices, as defined by Milnor in \cite{Milnor}. 
 If $p$ is a simple zero of $\sigma$, the local index is computed as follows: On an open neighborhood $U \ni p$ where $(T_X)|_U \cong \rr^{\oplus n}$, the vector field is represented by $n$ functions $(\sigma_1,\ldots, \sigma_n)$. The local index is the sign of the Jacobian determinant $\sign J_p(\sigma)$. 
It turns out that the sum of these local indices
is independent of the vector field, giving the first example of an ``enriched count."

The above has been generalized to sections of relatively orientable vector bundles on projective varieites over arbitrary fields, see work of Kass--Wickelgren \cite{kass_wickelgren} and references therein. 
A vector bundle $E$ is said to be \defi{relatively orientable} if $\Hom(\det T_X,  \det E) \cong L^{\otimes 2}$ for some line bundle $L$ and the choice of such an isomorphism is called a \defi{relative orientation}. 
They define an \defi{enriched Euler class} as a sum of local indices  valued in the Grothendieck-Witt group of the ground field
\[e(E, \sigma) \colonequals \sum_{p : \sigma(p) = 0}\ind_p(\sigma) \in \GW(k), \]
and show that this class in $\GW(k)$ is constant on $\aa^1$-connected components of the space of sections.

The rank provides an isomorphism $\GW(\cc) \cong \zz$; the rank and the signature induce an isomorphism $\GW(\rr) \cong \zz \oplus \zz$.
Given $a \in k^*$, let $\langle a \rangle \in \GW(k)$ denote the class of the rank $1$ bilinear form $(x, y) \mapsto axy$. 
Thus over $\rr$, $\langle a \rangle$ is the same as the information of the sign of $a$.  For simple zeros of a real section, the Kass--Wickelgren local index is $\langle J_p(\sigma) \rangle$, recovering Milnor's local index.  
When the rank is known, the enriched Euler class is therefore determined by the \defi{signed count}
\[s(E, \sigma) \colonequals \sum_{p \in X(\rr) \atop \sigma(p) = 0} \sgn \ind_p(\sigma) \in \zz. \]

The following example demonstrates the necessity of relative orientability for obtaining constant enriched counts and suggests what we may study instead without it.
\begin{example} \label{poly}
Consider the line bundle $E = \O_{\pp^1}(d)$ on $\pp_\rr^1$. We have $T_{\pp^1} \cong \O_{\pp^1}(2)$, and so $E$ is relatively orientable if and only if $d$ is even. 
Global sections of $E$ correspond to homogeneous degree $d$ polynomials on $\pp^1$. 
A relative orientation supplies an oriented coordinate $t$ in an affine patch around any zero, and for simple zeros,
the local index measures the sign of the derivative.
If $d$ is even, for any section $f$, we have

\begin{multicols}{2}
\[s(\O_{\pp^1}(d), f) = 0\]
\columnbreak
\begin{center}
 \begin{tikzpicture}
 \draw (0, 0) node [left] {$\pp^1$} -- (5, 0);
 \draw[red] (4.5, 0) node {$\bullet$};
  \draw[red] (4.5, -.5) node {$\infty$};
  \draw[blue] (4.2, 1.3) node{$f$};
  \draw (.7, 0) node {$\bullet$};
 \draw (1.6, 0) node {$\bullet$};
  \draw (2.6, 0) node {$\bullet$};
    \draw (3.8, 0) node {$\bullet$};
      \draw (.7, -1) node {\small $\langle-1 \rangle$};
 \draw (1.6, -1) node {\small $\langle 1 \rangle$};
  \draw (2.6, -1) node {\small $\langle -1 \rangle$};
    \draw (3.8, -1) node {\small $\langle 1 \rangle$};
 \draw[domain=.5:3.95,smooth,variable=\x,blue] plot ({\x},{(\x-.7)*(\x-1.6)*(\x-2.6)*(\x-3.8)});
 \end{tikzpicture}
 \end{center}
\end{multicols}

If $d$ is odd, we might still try to naively sum local indices of zeros of a section, and it will make sense in an affine $\aa_\rr^1 \subset \pp_\rr^1$. With respect to a coordinate $t$ on $\aa_\rr^1$, we may write $f = a_d t^d + \ldots + a_1 t + a_0$. 
Then we find
\[s(\O_{\pp^1}(d), f) = \begin{cases} 1 & \text{if $a_d > 0$} \\
-1 &\text{if $a_d < 0$.}\end{cases}\]

\vspace{.15in}
\begin{center}
  \begin{tikzpicture}
 \draw (0, 0) node [left] {$\pp^1$} -- (5, 0);
 \draw[red] (4.5, 0) node {$\bullet$};
   \draw[red] (4.5, -.5) node {$\infty$};
     \draw[blue] (4.3, 1.3) node{$f$};
  \draw (.7, 0) node {$\bullet$};
 \draw (1.6, 0) node {$\bullet$};
    \draw (3.8, 0) node {$\bullet$};
      \draw (.7, -1) node {\small $\langle 1 \rangle$};
 \draw (1.6, -1) node {\small $\langle -1 \rangle$};
    \draw (3.8, -1) node {\small $\langle 1 \rangle$};
 \draw[domain=.2:4.1,smooth,variable=\x,blue] plot ({\x},{.6*(\x-.7)*(\x-1.6)*(\x-3.8)});
 \end{tikzpicture}
 \hspace{.5in}
   \begin{tikzpicture}
 \draw (0, 0) node [left] {$\pp^1$} -- (5, 0);
 \draw[red] (4.5, 0) node {$\bullet$};
   \draw[red] (4.5, -.5) node {$\infty$};
     \draw[blue] (.6, 1.3) node{$f$};
  \draw (.7, 0) node {$\bullet$};
 \draw (1.6, 0) node {$\bullet$};
    \draw (3.8, 0) node {$\bullet$};
      \draw (.7, -1) node {\small $\langle -1 \rangle$};
 \draw (1.6, -1) node {\small $\langle 1 \rangle$};
    \draw (3.8, -1) node {\small $\langle -1 \rangle$};
 \draw[domain=.2:4.1,smooth,variable=\x,blue] plot ({\x},{-.6*(\x-.7)*(\x-1.6)*(\x-3.8)});
 \end{tikzpicture}
\end{center}
\vspace{.1in}
In other words, we obtain an enriched count of the $d$ zeros when $a_d \neq 0$, but it depends on $f$. 
Moreover, to make this enrichment, we had to choose a divisor $\infty \in \pp^1$. We then considered only sections that do not vanish along this divisor and found that there are two regions --- corresponding to positive and negative leading coefficient --- where different signed counts are attained.

An alternative approach would be to choose $\infty$ so that $f$ has a simple zero at $\infty$: then the signed count of the remaining zeros is constant.
\end{example}

The above example suggests that, even for non-orientable problems, we can make geometric meaning of local indices away from a suitably chosen divisor. 
\begin{defin} \label{relo}
We say that a vector bundle $E$ on a smooth projective variety $X$ is \emph{relatively orientable relative to an (effective) divisor} $D$ if $\Hom(\det T_X,  \det E) \otimes \O(D) \cong L^{\otimes 2}$ for some line bundle $L$. Equivalently, $E$ is relatively orientable on the open subvariety $X \smallsetminus D$.
\end{defin}
\begin{rem}
Every vector bundle is relatively orientable relative to a divisor, and there may be many choices of a divisor.
In practice, one should select an effective divisor that is geometrically meaningful in some way.
\end{rem}

Over $\rr$, Definition \ref{relo} allows us to make precise the phenomenon observed in Example \ref{poly}. The definition of local index by Milnor and its generalization by Kass--Wickelgren relies only on compatible trivializations over open neighborhoods of zeros. Thus, the local index of a section of a relatively orientable vector bundle on $X \smallsetminus D$ is well-defined at any isolated zero not in $D$.

Given a divisor $D$, we denote by $V_D \subset H^0(E)$ the locus of real sections with a real zero along $D$.
The following lemma extends enrichment techniques over $\rr$ to a broader setting, at the expense of removing those sections in $V_D$.

\begin{lem} \label{ouridea}
Let $X$ be a smooth real projective variety.
Suppose $E$ is relatively oriented relative to an effective divisor $D \subset X$.  
Let $H^0(E)^\circ$ denote the space of sections with isolated zeros. Then 
$s(E, \sigma)$, and hence $e(E, \sigma)$, is constant for $\sigma$ in any connected component of $H^0(E)^{\circ} \smallsetminus V_D$.
\end{lem}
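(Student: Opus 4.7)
The plan is to adapt the standard one-parameter cobordism argument that proves constancy of the enriched Euler class for (globally) relatively orientable bundles, as in \cite{kass_wickelgren}. The new wrinkle is that the relative orientation is only available on $U \colonequals X \smallsetminus D$, but any $\sigma \in H^0(E)^\circ \smallsetminus V_D$ has its real zero locus contained entirely in $U(\rr)$, and a path in $H^0(E)^\circ \smallsetminus V_D$ cannot push a zero into $D(\rr)$. So the entire cobordism takes place inside $U(\rr) \times [0,1]$, where the orientation lives, and local indices remain well-defined throughout.

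First I would verify that $V_D$ is closed in $H^0(E)$: it is the image of the closed subset $\{(x,\sigma) \in D(\rr) \times H^0(E) : \sigma(x)=0\}$ under the projection to $H^0(E)$, which is proper since $D(\rr)$ is compact. So $H^0(E)^\circ \smallsetminus V_D$ is open. Given $\sigma_0, \sigma_1$ in the same connected component, join them by a smooth path $\gamma\colon [0,1] \to H^0(E)^\circ \smallsetminus V_D$. Since $\gamma([0,1])$ is compact and $V_D$ is closed, a sufficiently small $C^\infty$-perturbation of $\gamma$ still lands in $H^0(E)^\circ \smallsetminus V_D$. Standard parametric transversality then arranges that
\[
Z \colonequals \{(x,t) \in X(\rr) \times [0,1] : \gamma(t)(x) = 0\}
\]
is a smooth $1$-manifold with boundary $Z \cap (X(\rr) \times \{0,1\})$, that $\gamma(t)$ has only simple zeros outside finitely many parameter values at which a single birth/death pair occurs, and that $Z \subset U(\rr) \times [0,1]$ (the last condition is automatic from $\gamma(t) \notin V_D$).

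Compactness of $X(\rr)$ and closedness of $Z$ in $X(\rr) \times [0,1]$ make $Z$ a compact $1$-manifold, hence a finite disjoint union of arcs and circles. The relative orientation on $U$, combined with the standard orientation of $[0,1]$, orients $Z$. Circles contribute nothing to $\partial Z$. Arcs with both endpoints in $X(\rr) \times \{0\}$ (resp.\ $X(\rr) \times \{1\}$) contribute a pair of zeros to $\sigma_0$ (resp.\ $\sigma_1$) carrying opposite local indices, by the standard birth/death sign computation, hence cancel in $s(E,\sigma_0)$ (resp.\ $s(E,\sigma_1)$). Arcs running between $X(\rr) \times \{0\}$ and $X(\rr) \times \{1\}$ pair a simple zero of $\sigma_0$ with one of $\sigma_1$ carrying the same local index. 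Summing over all components gives $s(E,\sigma_0) = s(E, \sigma_1)$. Over $\rr$, the rank of $e(E,\sigma) \in \GW(\rr) \cong \zz \oplus \zz$ is the topological Euler number of $E$, independent of $\sigma$, and the signature equals $s(E,\sigma)$; so constancy of $s$ upgrades to constancy of $e$.

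The hardest part will be the sign bookkeeping at arc endpoints --- verifying that the orientation of $Z$ inherited from the relative orientation on $U$ and the ambient $[0,1]$ matches the local indices correctly (same sign across a straight arc from $t=0$ to $t=1$, opposite signs across a U-turn). Because the entire argument lives inside $U(\rr) \times [0,1]$, this is the identical local calculation as in the globally relatively orientable case treated in \cite{kass_wickelgren}, and no new sign issues arise at the divisor $D$.
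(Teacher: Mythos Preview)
Your argument is correct in spirit and takes a genuinely different route from the paper. The paper proves directly that $s(E,\sigma)$ is a continuous (hence locally constant) integer-valued function on the space $A \subset C^\infty(X,E)$ of smooth sections with isolated zeros off $D$: first on the open subset $A'$ of sections with only simple zeros, by continuity of the Jacobian determinant, and then on all of $A$ by invoking Milnor's local alteration (replacing a degenerate zero by nearby simple ones without changing the total index, via the degree of the Gauss map on a small sphere). Your one-parameter cobordism argument is the other classical way to package the same homotopy invariance; it is equally valid and arguably more geometric.

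Two small imprecisions are worth flagging. First, your phrase ``a sufficiently small $C^\infty$-perturbation of $\gamma$ still lands in $H^0(E)^\circ\smallsetminus V_D$'' reads as a perturbation of the path inside the finite-dimensional space $H^0(E)$, but parametric transversality for $Z$ can fail there if $E$ is not globally generated. You should instead perturb to a path of $C^\infty$ sections of $E$ (keeping the condition of no real zeros on $D$), exactly as the paper does when it enlarges to $A\subset C^\infty(X,E)$. Second, if $\sigma_0$ or $\sigma_1$ has a non-simple isolated zero, then $Z$ cannot be a manifold with boundary at that end for any perturbation fixing the endpoints. The fix is to first replace $\sigma_i$ by a nearby smooth section with simple zeros and the same signed count, citing the fact that the local index at a degenerate zero equals the sum of the indices of the simple zeros into which it splits --- which is precisely Milnor's local alteration that the paper invokes. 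So in the end both proofs rest on the same local input; they differ only in whether the global step is phrased as ``continuity of a $\zz$-valued function'' or as ``boundary of a compact oriented $1$-manifold.''
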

\begin{proof}
Because the rank of $e(E, \sigma)$ is constant for algebraic sections, it suffices to show $s(E, \sigma)$ is constant on connected components of $H^0(E)^{\circ} \smallsetminus V_D$. More generally, the signed count is continuous on the subset $A \subset C^\infty(X, E)$ of $C^\infty$ sections of the real vector bundle $E$ with isolated zeros that are not contained in $D$.

Let $A' \subset A$ denote the subspace of sections which have simple zeros.
Suppose $p$ is a simple zero of a section $\sigma \in A'$.
Let $p \in U \subset X \smallsetminus D$ be an open neighborhood and choose isomorphisms $\phi: E|_U \cong \rr^{\oplus n}$ and $\psi: T_U \cong \rr^{\oplus n}$ such that 
$\det \phi^{-1} \circ \psi \in  \Hom(\det (T_X)(U),  \det E(U)) \cong L(U)^{\otimes 2}$ is a square under the relative orientation on $X \smallsetminus D$. With respect to these trivializations, $\sigma$ is represented by $n$ functions $(\sigma_1, \ldots, \sigma_n)$ and $\ind_p\sigma = \langle \det J_\sigma(p) \rangle$. Because the Jacobian is continuous, and $\rr^* \rightarrow \GW(\rr)$ by $a\mapsto \langle a \rangle$ is continuous, it follows that $s(E, \sigma)$ is continuous on $A'$.

Via the composition $\psi^{-1} \circ \phi$, each section $\sigma$ of $E|_U$ gives us a vector field $v$ on $U$. We now apply Milnor's local alteration as in \cite[``Step 2" of \textsection 6]{Milnor}. Suppose $p$ is a non-simple zero. Let $p \in N_1 \subset N \subset U$ be sufficiently small nested neighborhoods (in the real topology) and let $\lambda : U \rightarrow [0, 1]$ be a smooth function such that $\lambda(x) = 1$ for $x \in N_1$ and $\lambda(x) = 0$ for $x$ outside $N$. If $y$ is a sufficiently small regular value of $v$, then $v'(x) = v(x) - \lambda(x) y$ defines a vector field which is non-degenerate within $N$. By \cite[\textsection 6, Thm. 1]{Milnor}, the sum of Milnor's local indices at the zeros within $N$ is the degree of the ``Gauss mapping" $\overline{v} : \partial N \rightarrow S^{m-1}$, and hence does not change during this alteration. Applying this alteration locally around each non-simple zero shows that $s(E, \sigma)$ is continuous on $A$.
\end{proof}

\begin{rem}
Working over an arbitrary ground field, a
natural replacement for $V_D$ is the algebraic hypersurface $\tilde{V}_D \subset H^0(E)$ of sections vanishing at a closed point of $D$. In general, $H^0(E)^\circ \smallsetminus \tilde{V}_D$ has no non-trivial $\aa^1$-connected components, so the results of Kass--Wickelgren do not apply to give constant enriched counts. Thus, when working over $\rr$, Lemma \ref{ouridea} is stronger than the Kass--Wickelgren machinery. However, we believe this additional strength is special to $\rr$ and does not generalize readily to other fields. Notice also that over $\rr$, $V_D$ is contained in the real points of $\tilde{V}_D$ but need not equal it. In other words, Lemma \ref{ouridea} provides signed counts even when $\sigma$ has a pair of complex conjugate zeros along $D$. 

\end{rem}

Lemma \ref{ouridea} suggests the following approach to enriching non-orientable problems over $\rr$. 
First, restrict attention to sections with zeros away from a suitably chosen divisor.
The local index may then have a geometrically meaningful interpretation relative to this divisor.
The locus $V_D \subset H^0(E)$ will be codimension $1$, so we expect the complement $H^0(E) \smallsetminus V_D$ to have many components. 
However, on each component of the complement, the signed count is constant. The locus $V_D \subset H^0(E)$  should be thought of as ``walls" in the space of sections, where the relative orientation cannot be extended consistently. Signed counts change as one moves across these walls. One might then try to characterize the different components of $H^0(E) \smallsetminus V_D$ and thereby all of the possible signed counts. 

In the remaining sections, we carry out this procedure for the problem of $28$ bitangents. We characterize a natural connected region where the signed count is constant and
give examples demonstrating different possible signed counts. We also conjecture a list of all signed counts that are realized. Finally, we relate enriched counting of bitangents to the enriched count of $27$ lines on a cubic surface, in a manner akin to allowing one of the zeros in Example \ref{poly} to be at $\infty$. This last method yields results over arbitrary fields.


\section{The local index for bitangents}

In this section we will define a space $X$ of dimension $4$ and a bundle $E$ on $X$ of rank $4$, such that every quartic equation $f \in H^0( \pp^2, \O_{\pp^2}(4))$ gives rise to a section $\sigma_f$ of $E$ whose zeros correspond to the bitangents of $V(f)$.
A choice of a line $L_{\infty}$ in the plane determines a divisor $D_\infty \subset X$ such that $E$ is relatively orientable relative to $D_\infty$.
We compute the local index $\ind_p \sigma_f$ with respect to a relative orientation on $X \smallsetminus D_\infty$, and show that it agrees with our geometric definition of $\qtype$.

Let $S$ denote the (rank 2) tautological bundle on the Grassmannian ${\pp^2}^\vee$ of lines in $\pp^2$.  The $\overline{k}$-points of the projective bundle 
\[X  \colonequals \pp \Sym^2 S^\vee \]
correspond to the pairs $(L, Z)$, where $L \subset \pp^2$ is a line and $Z \subset L$ is a degree $2$ subscheme.  Write $\pi \colon X \to {\pp^2}^\vee$ for the natural projection map.  As a projective bundle, intersection theory on $X$ is straightforward, and we recall the key facts here.  The Picard group of $X$ is generated by pullbacks from ${\pp^2}^\vee$
and by a (relative) hyperplane class $\O_{X}(1)$.  The fiber of the bundle $\pi^*\Sym^2S^\vee$ at a point $(L, Z)$ is the $3$-dimensional space of quadratic polynomials on the line $L$ and the fiber of the universal subbundle $\O_X(-1) \hookrightarrow \pi^* \Sym^2S^\vee$ is the $1$-dimensional space spanned by a choice of quadratic equation defining $Z \subset L$.

The fiber of our vector bundle $E$ at a point $(L, Z)$ will be isomorphic to the space of quartic polynomials on $L$ modulo the square of an equation of $Z$.  Precisely, we define $E$ to be the quotient of $\Sym^4S^\vee$ by the subbundle $\O_X(-2)$ including via the tensor product
\[\O_X(-2) \simeq \O_X(-1) \otimes \O_X(-1) \to \Sym^2S^\vee \otimes \Sym^2 S^\vee \to \Sym^4 S^\vee. \]
Alternatively, recall that any degree $4$ subscheme $\Gamma \subset \pp^1$ imposes independent conditions on polynomials of degree $4$, and the kernel of the restriction map
\[H^0(\pp^1, \O_{\pp^1}(4)) \to H^0(\Gamma, \O_{\pp^1}(4)|_\Gamma) \]
is precisely the $1$-dimensional space spanned by an equation of $\Gamma$.  In particular, we can apply this to a degree $4$ subscheme $\Gamma = 2Z$, the square of a degree $2$ subscheme $Z \subset L$; the kernel of the restriction map is now generated by the square of the equation of $Z$.  Over the parameter space of such $(L,Z)$, we may therefore view the map $\Sym^4S^\vee \to E$ in the presentation
\[0 \to \O_X(-2) \to \Sym^4 S^\vee \to E \to 0\]
as evaluation of a quartic polynomial along $2Z \subset L$.

Given a quartic equation $f \in H^0(\pp^2, \O_{\pp^2}(4))$, restriction of $f$ to any line $L \subset \pp^2$ defines a section $\sigma_f$ of $\Sym^4 S^\vee$, and hence of $E$.  Alternatively, the section $\sigma_f$ of $E$ at $(L, Z)$ takes the evaluation of $f$ along $2Z$.  Evidently, the section $\sigma_f$ vanishes at $(L, Z)$ if and only if $L$ is a bitangent of the quartic plane curve $V(f)$ with points of tangency at $Z$ in $L$.
Using the splitting principle, it is not hard to show that $\deg c_4(E) = 28$, recovering the classical count.

By the splitting principle we have
\[\det E = \det \Sym^4 S^\vee \otimes \O_X(2) = (\det S^\vee)^{\otimes 10} \otimes \O_X(2) = \pi^* \O_{{\pp^2}^\vee}(10) \otimes \O_X(2). \]
On the other hand, using the relative Euler sequence 
\[0 \to \O_X \to \pi^*\Sym^2 S^\vee \otimes \O_X(1) \to T_{X/{\pp^2}^\vee} \to 0\]
for our projective bundle $X$, we compute
\begin{align*}
\det T_X &= \pi^*\det T_{{\pp^2}^\vee} \otimes \det T_{X/{\pp^2}^\vee} = \pi^*\det T_{{\pp^2}^\vee} \otimes \det (\pi^*\Sym^2 S^\vee \otimes \O_X(1)) \\
&=\pi^* \O_{{\pp^2}^\vee}(3) \otimes \det \pi^*\Sym^2 S^\vee \otimes \O_X(3) = \pi^*\O_{{\pp^2}^\vee}(6) \otimes\O_X(3).
\end{align*}
In particular,
\begin{equation} \label{rel}
\Hom(\det T_X, \det E) = \pi^*\O_{{\pp^2}^\vee}(4) \otimes \O_X(-1),
\end{equation}
which is not a tensor square, and the obstruction is the copy of $\O_X(-1)$. 

We now describe a section of $\O_X(1)$ defining an effective divisor $D_\infty$, away from which $E$ is relatively orientable.
Sections of $\O_X(1)$ restricted to the fiber over $L \in {\pp^2}^\vee$ correspond to linear forms on the space of quadratic polynomials on $L$.
 Given a line $L_\infty$, for each $L \neq L_\infty$, evaluation of quadratic polynomials at the point $L \cap L_\infty$ defines a section of $\O_X(1)|_{\pi^{-1}(L)}$. 
Together, this defines a section of $\O_X(1)$ away from $\pi^{-1}(L_\infty)$, which is codimension $2$. Let $D_\infty = \{(L, Z) : Z \cap L_\infty \neq \emptyset\}$ be the closure in $X$ of the vanishing locus of this section. As $X$ is smooth and $\O_X(1)|_{X \smallsetminus \pi^{-1}(L_\infty)} \cong \O(D_\infty)|_{X \smallsetminus \pi^{-1}(L_\infty)}$ these line bundles are isomorphic on all of $X$. 
Equation \eqref{rel} shows that $E$ is relatively orientable on the complement of $D_\infty$.
Thus, we can give $E|_{X \smallsetminus D_\infty}$ a relative orientation and make sense of the local index $\ind_{(L, Z)} \sigma_f$ at a zero $(L,Z) \notin D_\infty$. 

\begin{center}
\begin{tikzpicture}
\draw [->] (2.5, 5.5) node [above] {$E$} -- (2.5, 4.8);
\draw (0, 2) .. controls (1,2.2) .. (2.5, 2);
\draw (2.5, 2) .. controls (4,1.8) .. (5, 2);
\draw (0, 2+2) .. controls (1,2.2+2) .. (2.5, 2+2);
\draw (2.5, 2+2) .. controls (4,1.8+2) .. (5, 2+2);
\draw (0, 4) -- (.5, 4.5);
\draw (5, 4) -- (5.5, 4.5);
\draw (5, 2) -- (5.5, 2.5) -- (5.5, 4.5);
\draw (0+.5, 2+2+.5) .. controls (1+.5,2.2+2+.5) .. (2.5+.5, 2+2+.5);
\draw (2.5+.5, 2+2+.5) .. controls (4+.5,1.8+2+.5) .. (5+.5, 2+2+.5);
\draw (0, 2) -- (0, 4);
\draw (5, 2) -- (5, 4);
\draw (6, 3) node {$X$};
\draw (6, .5) node {${\pp^2}^\vee$};
\draw (3, -.25) -- (0, 0) -- (2, 1);
\draw (2, 1) -- (2+3, 1-.25) -- (3, -.25);
\draw [red] (0, 2+1) .. controls (1,2.2+1) .. (2.5, 2+1);
\draw [red] (2.5, 2+1) .. controls (4,1.8+1) .. (5, 2+1);
\draw [red] (0+.5, 2+1+.5) .. controls (1+.5,2.2+1+.5) .. (2.5+.5, 2+1+.5);
\draw [red] (2.5+.5, 2+1+.5) .. controls (4+.5,1.8+1+.5) .. (5+.5, 2+1+.5);
\draw [red] (5+.5, 2+1+.5) -- (5, 2+1);
\draw [red] (0, 2+1) -- (0+.5, 2+1+.5);
\draw [red] (2+.25, .2+.25) node {$\bullet$};
\draw [red] (2+.25, .2+.25) node [right] {$L_\infty$};
\draw [red] (2+.25, 2.08+.25) -- (2+.25, 4.08+.25);
\draw [red] (0, 3) node [left] {$D_\infty$};
\draw[<->] (3+9, -.25) -- (9, 0) -- (2+9, 1);
\draw (3+9.2, -.25) node {$b$};
\draw (2+9.2, 1.1) node {$a$};
\draw [->] (9, 2) -- (9.5,2.5);
\draw (9.6, 2.6) node {$s$}; 
\draw [->] (9, 2) -- (9, 4) node[above] {$t$};
\draw (11, 5.8) node {$y_2^3y_3, y_2^2y_3^2, y_2y_3^3, y_3^4$};
\end{tikzpicture}
\end{center}

\vspace{.1in}

We may choose coordinates $[y_1, y_2, y_3]$ on $\pp_K^2$ so that 
\begin{equation} \label{good}
L_\infty = V(y_3), \qquad L = V(y_1), \qquad \text{and} \qquad Z = V(y_2^2+ \alpha y_3^2),
\end{equation}
where $\alpha \in K$.
(Our assumption $Z \cap L_\infty = \varnothing$ means the coefficient of $y_2^2$ in the defining equation of $Z$ is non-zero, so we may always complete the square.)
Given coordinates $[y_1, y_2, y_3]$ on $\pp^2$ such that \eqref{good} holds, we now describe a procedure for giving ``standard affine coordinates" around each $K$-point $(L, Z) \notin D_\infty$. 
Let $\aa^2 \subset (\pp^2)^\vee$ be the affine patch with coordinates $(a, b)$ corresponding to the line $L_{a,b} = V(y_1 + a y_2 + by_3)$, so $L = L_{0,0}$ is the origin. On this affine, the vector bundle $S^\vee$ is trivialized by $y_2$ and $y_3$ (by which mean the sections obtained by restricting the linear forms $y_2$ and $y_3$ to each of the lines). Thus, $\pi^{-1}(\aa^2) \subset X$ is identified with $\aa^2 \times \pp \langle y_2^2, y_2y_3, y_3^2 \rangle$, where $\langle y_2^2, y_2y_3, y_3^2 \rangle$ denotes the three-dimensional vector space spanned by $y_2^2$, $y_2y_3$, and $y_3^2$. Let $\aa^2 \subset \pp \langle y_2^2, y_2y_3, y_3^2 \rangle$ be the affine plane with coordinates $(s, t)$ corresponding to $Z_{s,t} = V((y_2^2 + \alpha y_3^2) + s y_2y_3 + t y_3^2)$, so $Z = Z_{0,0}$ is the origin $(s, t) = (0, 0)$ here. 
We refer to such an $\aa^2 \times \aa^2 \cong \aa^4$ with coordinates $(a, b, s, t)$ as ``standard affine coordinates centered at $(L, Z)$."

To each choice of coordinates as above, we associate a trivialization of $E$.
Corresponding to our trivialization of $S^\vee$, the vector bundle $\mathrm{Sym}^4 S^\vee$ is trivialized by $y_2^4$, $y_2^3y_3$, $y_2^2y_3^2$, $y_2y_3^3$, and $y_3^4$. Over our standard affine chart, the tautological bundle $\O_X(-1)$ is trivialized by the non-vanishing section $(y_2^2 + \alpha y_3^2) + s y_2y_3 + t y_3^2$, and $\O_X(-2)$ is trivialized by its square $((y_2^2 + \alpha y_3^2) + s y_2y_3 + t y_3^2)^2$. Using the relation 
\begin{equation} \label{y2rel}
y_2^4 = -(2s y_2^3y_3 + 2(\alpha + t) y_2^2y_3^2 + 2s\alpha y_2y_3^2 + (\alpha^2+2\alpha t)y_3^4) +O((s,t)^2),
\end{equation}
the monomials $y_2^3y_3$, $y_2^2y_3^2$, $y_2y_3^3$, and $y_3^4$ trivialize the quotient bundle $E$ over our standard affine chart.

The following lemma will allow us to use these nice coordinates to compute local indices.

\begin{lem} \label{exists}
There exists a relative orientation $\Hom(\det T_{X \smallsetminus D_\infty},\det E|_{X \smallsetminus D_\infty}) \cong \pi^*\O_{{\pp^2}^\vee}(2)|_{X \smallsetminus D_\infty}^{\otimes 2}$ on $X \smallsetminus D_\infty$ such that
for every standard affine chart $U \cong \aa^4_{(a, b, s, t)}$,
 the map $\det (T_X)|_U \rightarrow \det E|_U$ induced by sending the basis $(da, db, ds, dt)$ to the associated trivialization $(y_2^3y_3, y_2^2y_3^2, y_2y_3^3,y_3^4)$ is a tensor square of an element of $H^0(U, \pi^*\O_{{\pp^2}^\vee}(2)|_U)$.
\end{lem}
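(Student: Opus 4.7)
The plan is to construct the relative orientation explicitly using the section $\sigma_\infty$, then to verify the squaring condition by a local calculation, and finally to correct by a global scalar if necessary. Since $\sigma_\infty$ is a nowhere-vanishing section of $\O_X(1)$ on $X\smallsetminus D_\infty$, multiplication by $\sigma_\infty$ trivializes $\O_X(-1)|_{X\smallsetminus D_\infty}$. Combined with \eqref{rel}, this produces a canonical candidate orientation
\[
\Psi_\infty:\Hom(\det T_X,\det E)|_{X\smallsetminus D_\infty} \xrightarrow{\ \cdot\,\sigma_\infty\ } \pi^*\O_{{\pp^2}^\vee}(4)|_{X\smallsetminus D_\infty} = \bigl(\pi^*\O_{{\pp^2}^\vee}(2)|_{X\smallsetminus D_\infty}\bigr)^{\otimes 2}.
\]

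To verify the condition on a standard affine chart $U\cong\aa^4_{(a,b,s,t)}$, I would compute the image $\Psi_\infty(\tau_2)$ (where $\tau_2$ denotes the map of the lemma) in the local trivializations $y_2,y_3$ of $S^\vee$ and $q_0 = (y_2^2+\alpha y_3^2) + sy_2y_3+ty_3^2$ of $\O_X(-1)$. First, $\sigma_\infty|_U$ is dual to $q_0$, since by construction $\sigma_\infty$ evaluates $q_0$ at the point where $y_3=0$, giving $1$. Second, the relative Euler sequence on $\pp(\Sym^2 S^\vee)$ yields $\partial_s\wedge\partial_t = q_0^{-3}\otimes(y_2^2\wedge y_2y_3\wedge y_3^2)$ in $\det T_{X/{\pp^2}^\vee}$. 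Third, the identification $T_{{\pp^2}^\vee} = \Hom(S,V/S)$ together with $V/S\cong\det S^\vee$ (where $V$ is the trivial rank-$3$ bundle, so $\det V$ is trivial) gives $\partial_a\wedge\partial_b = (y_2\wedge y_3)^{\otimes 3}$ in $\det T_{{\pp^2}^\vee}$. Fourth, the defining sequence $0\to\O_X(-2)\to\Sym^4 S^\vee\to E\to 0$ together with the unimodular change of basis relating $(y_2^4,y_2^3y_3,\ldots,y_3^4)$ and $(q_0^2,y_2^3y_3,\ldots,y_3^4)$ gives $\omega_E = q_0^{-2}\otimes(y_2^4\wedge y_2^3y_3\wedge\cdots\wedge y_3^4)$ in $\det E$.

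Assembling these under the universal identifications $\det\Sym^k S^\vee = (\det S^\vee)^{\otimes\binom{k+1}{2}}$ for $k=2,4$, I expect to obtain $\Psi_\infty(\tau_2) = c\cdot(y_2\wedge y_3)^{\otimes 4}$, where $c\in k^*$ is a constant depending only on the chosen conventions for these universal identifications, and hence independent of the chart. Rescaling $\Psi_\infty$ globally by $c^{-1}$ then gives the desired orientation: under it $\tau_2$ maps to $((y_2\wedge y_3)^{\otimes 2})^{\otimes 2}$, visibly the tensor square of $(y_2\wedge y_3)^{\otimes 2}\in H^0(U,\pi^*\O_{{\pp^2}^\vee}(2))$. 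The main obstacle will be the careful bookkeeping in the four computations above, particularly expressing each of $\partial_a,\partial_b,\partial_s,\partial_t$ explicitly in terms of the relevant Euler sequences; once these local formulas are in place, the independence of $c$ from the chart and the final global rescaling are essentially formal.
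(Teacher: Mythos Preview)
Your approach is sound but takes a genuinely different route from the paper's. The paper proves the lemma by a \emph{transition function} argument: given two standard charts $U$ and $U'$ centered at the same point, it writes down the general change of coordinates $y_i' = \sum \lambda_{ij} y_j$ respecting \eqref{good}, computes the Jacobian determinant of $(da,db,ds,dt) \mapsto (da',db',ds',dt')$ to be $\lambda_3^4/(\lambda_1^2\lambda_2^2)$, computes the determinant of the change of trivialization of $E$ to be $\lambda_2^6\lambda_3^{10}$, and observes that both are squares. Thus the locally defined orientations glue.

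By contrast, you construct the orientation \emph{globally} first, via multiplication by the distinguished section $\sigma_\infty$, and then verify the squaring condition by a single local computation whose output is a universal constant times $(y_2\wedge y_3)^{\otimes 4}$; naturality of the identifications $\det\Sym^k W \cong (\det W)^{\otimes\binom{k+1}{2}}$ ensures that constant is independent of the chart. Your method is more conceptual and makes explicit why $\sigma_\infty$ is the source of the orientation, at the cost of tracking several Euler-sequence identifications simultaneously. The paper's method is more elementary---pure linear algebra in coordinates---but obscures the global picture. One caution: be careful about exactly which line bundle $\sigma_\infty$ is a section of; the evaluation map $\Sym^2 S^\vee \to (\ell_p^\vee)^{\otimes 2}$ naturally lands in a twist by $\pi^*\O(2)$, which may shift your identification of $L$ from $\pi^*\O(2)$ to $\pi^*\O(3)$, though this does not affect the existence of the relative orientation or the squaring conclusion.
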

\begin{proof}
Suppose that $U$ is a standard affine associated to coordinates $[y_1, y_2, y_3]$ on $\pp^2$. It suffices to show that the orientation coming from
\[ (da, db, ds, dt)\mapsto (y_2^3y_3, y_2^2y_3^2, y_2y_3^3,y_3^4)\]
 agrees with the orientation induced by any other choice of coordinates $[y_1', y_2', y_3']$ satisfying \eqref{good} for any $L = L_{a_0,b_0}$ and $Z = Z_{s_0,t_0}$ with $(a_0,b_0, s_0, t_0)  \in U$. By convention, $y_3$ and $y_3'$ vanish along $L_\infty$, so $y_3'$ is a multiple of $y_3$.
Furthermore, the equation of $Z_{s_0,t_0}$ restricted to $V(y_1')$ has no cross term with respect to $y_2', y_3'$. Thus, $U'$ corresponds to
\begin{align*}
y_1' &= \lambda_1(y_1 + a_0y_2 + b_0y_3) \\
y_2' &= \lambda_2 \left(y_2 + \frac{s_0}{2} y_3\right) + \mu y_1' \\
y_3' &= \lambda_3 y_3
\end{align*}
for some $\lambda_i \in K^\times$ and $\mu \in K$. 
To determine the change of basis matrix for $(da', db', ds', dt')$ to $(da, db, ds, dt)$
we write
\begin{align*}
y_1' + a'y_2' + b'y_3' &=  (\lambda_1+a'\mu)(y_1 + a_0y_2 + b_0y_3) + a'\lambda_2 \left(y_2 + \frac{s_0}{2} y_3\right) + b' \lambda_3 y_3 \\
&= (\lambda_1+a'\mu)\left(y_1 + \left(a_0+\frac{a'\lambda_2}{\lambda_1 + a'\mu}\right) y_2 + \left(b_0 + \frac{a'\lambda_2 s_0+ 2b'\lambda_3}{2(\lambda_1 + a'\mu)} \right)y_3\right),
\end{align*}
to see that at $(a',b') =(0,0)$, we have
\begin{align*}
da = \frac{\lambda_2}{\lambda_1} da' \qquad \text{and} \qquad db = \frac{\lambda_2 s_0}{2 \lambda_1} da' + \frac{\lambda_3}{\lambda_1}db'.
\end{align*}
Similarly, the equation for $Z'_{s',t'}$ on $V(y_1')$ is
\begin{align*}
y_2'^2 + s' y_2'y_3' + (\alpha' + t') y_3'^2 &= \lambda_2^2\left( y_2^2 + s_0y_2y_3 + \frac{s_0^2}{4}y_3^2  + s'\frac{\lambda_3}{\lambda_2} y_2y_3 + s'\frac{s_0\lambda_3}{2\lambda_2} y_3^2 + (\alpha' + t') \frac{\lambda_3^2}{\lambda_2^2} y_3^2\right) \\
&=  \lambda_2^2\left( y_2^2  + \left(s_0 + s'\frac{\lambda_3}{\lambda_2}\right) y_2y_3 + \left(\frac{s_0^2}{4} +\alpha' \frac{\lambda_3^2}{\lambda_2^2} + s'\frac{s_0\lambda_3}{2\lambda_2} + t' \frac{\lambda_3^2}{\lambda_2^2} \right)y_3^2\right),
\end{align*}
which shows that at $(s, t) = (0, 0)$, we have
\[ds = \frac{\lambda_3}{\lambda_2} ds' \qquad \text{and} \qquad dt = \frac{s_0\lambda_3}{2\lambda_2} ds' + \frac{\lambda_3^2}{\lambda_2^2} dt'.\]
Thus, the change of basis for $(da', db', ds', dt')$ to $(da, db, ds, dt)$ has determinant $\frac{\lambda_3^4}{\lambda_1^2\lambda_2^2}$.
Since $y_3'$ is a multiple of $y_3$, the change of basis for $(y_2'^3y_3', y_2'^2y_3'^2, y_2'y_3'^3,y_3'^4)$ to
$(y_2^3y_3, y_2^2y_3^2, y_2y_3^3,y_3^4)$ is upper-triangular. The product of diagonal entries is $(\lambda_2^3 \lambda_3)(\lambda_2^2\lambda_3)(\lambda_2\lambda_3^3)(\lambda_3^4) = \lambda_2^6\lambda_3^{10}$. Because both change of basis matrices have square determinants, the two possible relative orientations agree.
\end{proof}

We now show that the local index encodes the $\qtype$ of bitangents, as defined in Definition \ref{qdef}.
 

\begin{lem}  \label{indlem}
 Let $L$ be a line defined over $K$.
Let $f$ be a smooth quartic over $K$ such that $\sigma_f$ has an isolated zero at $(L, Z = z_1 + z_2)$ and $Z \cap L_\infty = \emptyset$.  Then
\begin{equation}\label{local_index} \ind_{(L, Z)} \sigma_f = \qtype_{L_\infty}(L) \ \text{in } \GW(K). \end{equation}
\end{lem}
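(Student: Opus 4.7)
Since the local index depends only on the Jacobian of $\sigma_f$ in a trivialization for which the orientation map is a square, Lemma~\ref{exists} reduces the problem to a direct computation in standard affine coordinates $(a,b,s,t)$ centered at $(L, Z)$ with associated trivialization $(y_2^3y_3, y_2^2y_3^2, y_2y_3^3, y_3^4)$ of $E$. The plan is to assume first that $(L, Z)$ is a simple zero (the isolated case reduces to this by the usual deformation argument of Kass--Wickelgren), write $\sigma_f$ explicitly in these coordinates, and then compute the $4\times 4$ Jacobian determinant at the origin.

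\textbf{Step 1 (setup).} Choose coordinates so that \eqref{good} holds. Because $(L, Z)$ is a zero of $\sigma_f$, the restriction $f|_L$ is a scalar multiple of $(y_2^2 + \alpha y_3^2)^2$. Working with a fixed affine representative, write
\[
f = c\,(y_2^2 + \alpha y_3^2)^2 + y_1\, g(y_1, y_2, y_3) + y_1^2\, h(y_1, y_2, y_3),
\]
where $c \in K^\times$ and $g(0, y_2, y_3) = \gamma_0 y_2^3 + \gamma_1 y_2^2 y_3 + \gamma_2 y_2 y_3^2 + \gamma_3 y_3^3$.

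\textbf{Step 2 ($a, b$ columns).} The line $L_{a,b}$ is parametrized by $y_1 = -ay_2 - by_3$, so to first order in $(a,b)$ around the origin we have $f|_{L_{a,b}} \equiv c(y_2^2 + \alpha y_3^2)^2 + (-ay_2 - by_3)\,g(0, y_2, y_3) \pmod{(a,b)^2}$. Reducing modulo $(y_2^2 + \alpha y_3^2)^2$ using the relation \eqref{y2rel} at $(s,t)=(0,0)$, namely $y_2^4 \equiv -2\alpha y_2^2 y_3^2 - \alpha^2 y_3^4$, expresses $\partial \sigma_f/\partial a|_0$ and $\partial \sigma_f/\partial b|_0$ in the basis $(y_2^3y_3, y_2^2y_3^2, y_2y_3^3, y_3^4)$ as explicit linear expressions in $\gamma_0, \gamma_1, \gamma_2, \gamma_3$ (and $\alpha$).

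\textbf{Step 3 ($s, t$ columns).} At $(a,b) = (0,0)$, the restriction $f|_L = c(y_2^2+\alpha y_3^2)^2$ is independent of $(s,t)$; only the quotient changes. Setting $q = y_2^2+\alpha y_3^2$ and $r = sy_2y_3 + ty_3^2$, the identity $q^2 = (q+r)^2 - 2qr - r^2$ gives, to first order,
\[
c\,q^2 \equiv -2c\,(y_2^2+\alpha y_3^2)(sy_2y_3 + t y_3^2) \pmod{(q+r)^2},
\]
from which one reads off that the $(s,t)$ columns are proportional to $(-2c, 0, -2c\alpha, 0)$ and $(0, -2c, 0, -2c\alpha)$.

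\textbf{Step 4 (determinant).} Since the two right-hand columns form scalar blocks $-2cI$ and $-2c\alpha I$ that commute, the $4 \times 4$ determinant reduces to a $2\times 2$ block determinant. A short calculation yields
\[
\det J \;=\; 4c^2 \Bigl[(\alpha \gamma_1 - \gamma_3)^2 + \alpha (\alpha \gamma_0 - \gamma_2)^2\Bigr].
\]
On the other hand, the two points of $Z$ are $z_\pm = [0 : \pm\sqrt{-\alpha} : 1]$, and $\partial_L f$ restricts to $g(0, y_2, y_3)$ on $L$. Evaluating and multiplying gives
\[
\partial_L f(z_1)\cdot \partial_L f(z_2) \;=\; g(0, \sqrt{-\alpha}, 1)\,g(0, -\sqrt{-\alpha}, 1) \;=\; (\alpha\gamma_1 - \gamma_3)^2 + \alpha(\alpha\gamma_0 - \gamma_2)^2.
\]
Since $4c^2$ is a square in $K^\times$, this yields $\langle \det J \rangle = \langle \partial_L f(z_1)\cdot \partial_L f(z_2)\rangle$ in $\GW(K)$, which is precisely $\qtype_{L_\infty}(L)$.

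\textbf{Expected difficulty.} The argument is essentially a bookkeeping exercise. The main obstacle is keeping the reduction modulo $(q+r)^2$ straight and verifying that all auxiliary scalar factors (the $c$, the factors of $2$ and $\alpha$) collect into a square. The block structure of the Jacobian is what makes the final determinant factor cleanly; without it, the identification with $\partial_L f(z_1)\cdot\partial_L f(z_2)$ would be much more opaque.
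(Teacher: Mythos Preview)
Your proof is correct and follows essentially the same route as the paper: choose standard coordinates, write down the Jacobian of $\sigma_f$ at the origin, and check that its determinant equals $4c^2\cdot \partial_Lf(z_1)\partial_Lf(z_2)$. The only differences are cosmetic—the paper normalizes $c=1$ and writes out the $4\times4$ determinant directly, whereas you exploit the block structure $\bigl(\begin{smallmatrix}A & -2cI\\ B & -2c\alpha I\end{smallmatrix}\bigr)$ to reduce to $\det(B-\alpha A)$, which is a nice shortcut.
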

\begin{proof}
If $(L, Z)$ is a zero of $\sigma_f$, then working in standard affine coordinates centered at $(L, Z)$, we have
 $f|_L = (y_2^2 + \alpha y_3^2)^2$ for some $\alpha \in K$. 
In particular, $f$ is of the form
\[f(y_1, y_2, y_3) = (y_2^2 + \alpha y_3^2)^2 + y_1 ( c_{1,3,0} y_2^3 + c_{1,2,1}y_2^2 y_3 + c_{1,1,2} y_2y_3^2 + c_{1,0,3}y_3^3) + O(y_1^2). \]

To evaluate the right hand side of \eqref{local_index}, we work in the affine patch $y_3 = 1$, wherein $z_1 = [0, d, 1]$ and $z_2 = [0, -d, 1]$ for $d^2 = -\alpha$.  Up to squares, we have
\begin{align}
\partial_L f(z_1) \cdot \del_L f(z_2) &= (c_{1,3,0} d^3 + c_{1,2,1}d^2 + c_{1,1,2} d + c_{1,0,3})(-c_{1,3,0} d^3 + c_{1,2,1}d^2 - c_{1,1,2} d + c_{1,0,3}) \notag \\
&= -c_{1,3,0}^2 d^6 + (c_{1,2,1}^2- 2c_{1,3,0}c_{1,1,2})d^4 + (- c_{1,1,2}^2 + 2c_{1,0,3} c_{1,2,1} )d^2 + c_{1,0,3}^2 \notag \\
&= \alpha^3 c_{1,3,0}^2 + (c_{1,2,1}^2- 2c_{1,3,0}c_{1,1,2})\alpha^2 + (c_{1,1,2}^2 - 2c_{1,0,3} c_{1,2,1})\alpha + c_{1,0,3}^2. \label{rhs}
\end{align}

To evaluate the left hand side of \eqref{local_index}, we use the associated trivialization of $E$ as in Lemma \ref{exists}.
Because $(L, Z)$ is a simple zero, $\ind_{(L, Z)} \sigma_f$ is determined by the Jacobian evaluated at $0$ of the induced map from $\aa_K^4 \to \aa_K^4$ given by the section $\sigma_f$.  
With respect to our chosen trivializations, the value in $\aa^4_K$ of this map at the point $(a, b, s, t) \in \aa^4_K$ is the tuple of coefficients expressing $f|_{L_{a,b}}$ modulo $(y_2^2 + s y_2y_3 + (\alpha + t)y_3^2)^2$ as a linear combination of $y_2^3y_3$, $y_2^2y_3^2$, $y_2y_3^3$ and $y_3^4$.  Using \eqref{y2rel}, we have
\begin{align*}
f|_{L_{a,b}} &= (y_2^4 + 2 \alpha y_2^2 y_3^2 + \alpha^2 y_3^4) + (-ay_2 - by_3)( c_{1,3,0} y_2^3 + c_{1,2,1}y_2^2 y_3 + c_{1,1,2} y_2y_3^2 + c_{1,0,3}y_3^3) + O((a,b)^2) \\
 &= 2\alpha y_2^2y_3^2 + \alpha^2y_3^4 + (-ay_2 - by_3) (  c_{1,2,1}y_2^2 y_3 + c_{1,1,2} y_2y_3^2 + c_{1,0,3}y_3^3) - bc_{1,3,0}y_2^3y_3 \\
 &\qquad  + (1 - a c_{1,3,0}) y_2^4 + O((a,b)^2) \\
&=2\alpha y_2^2y_3^2 + \alpha^2y_3^4 + (-ay_2 - by_3) (  c_{1,2,1}y_2^2 y_3 + c_{1,1,2} y_2y_3^2 + c_{1,0,3}y_3^3)  - bc_{1,3,0}y_2^3y_3 \\
&\qquad -(1 - a c_{1,3,0})(2s y_2^3y_3 + 2(\alpha + t) y_2^2y_3^2 + 2s\alpha y_2y_3^2 + (\alpha^2+2\alpha t)y_3^4) + O((a, b, s, t)^2).
\end{align*}
In particular, the Jacobian matrix at $(a, b, s, t) = (0, 0, 0, 0)$ is
\[\left(\begin{matrix}
-c_{1,2,1} & 2c_{1,3,0}\alpha -c_{1,1,2} & -c_{1,0,3} & c_{1,3,0}\alpha^2\\
-c_{1,3,0} & - c_{1,2,1} & -c_{1,1,2} & -c_{1,0,3} \\
-2 & 0 & -2\alpha & 0\\
 0 & -2 & 0 & -2\alpha 
\end{matrix}\right)\]
whose determinant is precisely 4 times \eqref{rhs}.
\end{proof}

\section{Signed counts over $\rr$}

Suppose we have fixed a line $L_\infty$ defined over $\rr$.
Using the relative orientation of Lemma \ref{exists}, Lemma \ref{ouridea} show that the signed count $s(E, \sigma_f)$
is locally constant as a function of $f \in H^0(E)^\circ \smallsetminus V_{D_\infty}$.  
If $Q = V(f)$, Lemma \ref{indlem} then shows that (the geometrically meaningful signed count)
\[
 s_{L_\infty}(Q) \colonequals \# \left({\text{real bitangents with}\atop \text{$\qtype_{L_\infty}(L) = \langle 1 \rangle$}} \right) - \# \left({\text{real bitangents with}\atop \text{$\qtype_{L_\infty}(L) = \langle -1 \rangle$}} \right) 
\]
is constant for $f$ in real connected components of $H^0(E)^\circ \smallsetminus V_{D_\infty}$. 


The following lemma describes a natural pair of connected regions in the space of allowed sections, corresponding quartics whose real points are compact curves in $\aa^2_\rr  = \pp^2_{\rr}\smallsetminus L_\infty$.


\begin{lem} \label{conn}
Let $A$ be the space 
of real quartic polynomials $f$ such that $V(f) \cap L_\infty$ contains no real points and let $A^\circ \subset A$ be those quartics with isolated bitangents.
Then $A^\circ$ is a pair of connected regions inside $H^0(E)^\circ \smallsetminus V_{D_\infty}$, where every section in one connected component $A^\circ$ is the negative of a section in the other component.
\end{lem}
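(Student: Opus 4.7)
The plan is to show that $A^\circ$ has exactly two connected components, distinguished by the sign of $f|_{L_\infty}$, with the two components related by $f \mapsto -f$.

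\emph{Containment.} First I would verify $A^\circ \subset H^0(E)^\circ \smallsetminus V_{D_\infty}$. The inclusion in $H^0(E)^\circ$ is by definition of $A^\circ$. For the second, let $(L, Z) \in X(\rr)$ be a real zero of $\sigma_f$ for some $f \in A$. Since $f|_{L_\infty}$ has no real zeros, $L_\infty$ itself is not a bitangent, so $L \neq L_\infty$ and $L \cap L_\infty$ is a single real point. As $V(f)$ has no real points on $L_\infty$, this intersection point is not in $V(f)$, and hence $Z \subset L \cap V(f)$ avoids $L_\infty$. Thus $(L, Z) \notin D_\infty$, so $f \notin V_{D_\infty}$.

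\emph{Two convex components of $A$.} Since $L_\infty(\rr) \cong S^1$ and $\O_{L_\infty}(4)$ has even degree, the real line bundle $\O_{L_\infty}(4)|_{L_\infty(\rr)}$ is trivializable. After fixing a trivialization, each nowhere-vanishing section has a well-defined constant sign. For $f \in A$, the restriction $f|_{L_\infty}$ is nowhere vanishing, so has sign $+$ or $-$. Let $A^+, A^- \subset A$ denote the respective subsets. Each $A^\pm$ is convex --- a convex combination of positive sections on the compact $L_\infty(\rr)$ is positive --- hence connected; they are disjoint; and $A = A^+ \sqcup A^-$. Since $f \mapsto -f$ flips the sign of $f|_{L_\infty}$, we have $A^- = -A^+$.

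\emph{Connectedness is preserved after removing the bad locus.} Finally, I would argue that the bad locus $A \smallsetminus A^\circ$ of quartics with non-isolated bitangents has real codimension $\geq 2$ in $A$, so that each $A^\pm \cap A^\circ$ remains connected. A reduced plane quartic (even if singular) has at most finitely many bitangents --- any positive-dimensional family of bitangents forces $f$ to be non-reduced, i.e., $V(f)$ has a component of multiplicity $\geq 2$. The non-reduced locus is contained in the union of the images of $g \mapsto g^2$ (from conics) and $(\ell, g) \mapsto \ell^2 g$ (from lines and conics), which have dimensions at most $5$ and $8$ respectively in the $15$-dimensional space of quartics. Hence the bad locus has real codimension at least $7$ in $A^\pm$, and removing a real algebraic subset of codimension $\geq 2$ from a convex open subset of $\rr^n$ preserves connectedness. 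Since negation $f \mapsto -f$ preserves $A^\circ$ (as $\sigma_{-f}=-\sigma_f$ has the same zero scheme), the two components $A^\pm \cap A^\circ$ are swapped by negation. The main technical point I expect to work out carefully is the classification argument showing that reduced singular quartics have only finitely many bitangents, confining the bad locus to the non-reduced strata listed above.
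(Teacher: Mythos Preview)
Your argument follows the same three-step outline as the paper: containment in $H^0(E)^\circ \smallsetminus V_{D_\infty}$, two components of $A$ related by negation, and high codimension of the non-isolated locus. The substantive difference is in the second step. The paper parametrizes $r(A) \subset H^0(L_\infty, \O(4))$ explicitly via
\[(a,b,c,d,e) \mapsto e\,(x^2-2axy+(a^2+b^2)y^2)(x^2-2cxy+(c^2+d^2)y^2),\]
reading off two connected pieces from the sign of $e$, whereas you observe that $\O(4)|_{L_\infty(\rr)}$ is trivializable (even degree), assign a global sign to each nowhere-vanishing $f|_{L_\infty}$, and use that each sign class $A^\pm$ is convex. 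Your route is shorter and coordinate-free; the paper's gives a concrete description of $r(A)$. Both then invoke the same codimension estimate for non-reduced quartics.

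One small slip in your containment step: the implication ``$f|_{L_\infty}$ has no real zeros, so $L_\infty$ itself is not a bitangent'' is not valid --- $L_\infty$ can be a non-split bitangent, with $f|_{L_\infty}$ the square of an irreducible real quadratic, and then $(L_\infty,Z)$ is a real zero of $\sigma_f$ lying on $D_\infty$. This occurs only on a codimension-$2$ locus in $A$ (perfect-square binary quartics form a $3$-dimensional family inside the $5$-dimensional space of quartics on $L_\infty$), so it can be absorbed into the bad locus of your third step without affecting connectedness. The paper's containment argument is equally casual about this case.
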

\begin{proof}
Any real bitangent of $V(f)$ meets $L_\infty$ in a real point. 
If $V(f) \cap L_\infty$ contains no real points, it follows that no real bitangent is tangent to $V(f)$ along $L_\infty$. Hence, $A \subset H^0(E)\smallsetminus V_{D_\infty}$.

We first describe the two connected components of $A$. 
Restriction of polynomials to the line at infinity defines a linear map
\[r: H^0(E) \cong H^0(\pp^2, \O_{\pp^2}(4)) \cong \rr^{15} \rightarrow H^0(L_\infty, \O_{L_\infty}(4)) \cong \rr^{5}.\]
Because the defining condition of $A$ depends only on the restriction of polynomials to $L_\infty$, we have $A = r^{-1}(r(A))$. Thus, it suffices to describe $r(A)$.
Give $L_\infty \cong \pp^1$ coordinates $x, y$ so that $H^0(L_\infty, \O_{L_\infty}(4))$ is identified with homogenous degree $4$ polynomials in $x$ and $y$. Consider the map $\rr^5 \rightarrow H^0(L_\infty, \O_{L_\infty}(4))$ defined by
\[(a, b, c, d, e) \mapsto e(x^2 - 2axy + (a^2 + b^2)y^2)(x^2 -2cxy + (c^2 + d^2)y^2).\]
By construction, the image of $\{(a, b, c, d, e) : b, d > 0, e \neq 0\}$ is $r(A)$. Hence, $r(A)$ has two connected components corresponding to the images of the regions for $e > 0$ and $e < 0$, which give quartic polynomials that are negatives of each other.

If $V(f)$ has a positive-dimensional family of bitangents, then $V(f)$ has a non-reduced component; the space of such $f$ is symmetric under negation and occurs codimension greater than $2$, so
$A^\circ$ still consists of two connected components with the described property. 
\end{proof}

\begin{proof}[Proof of Theorem \ref{cap}]
Lemma \ref{indlem} shows that the local index of a bitangent to $V(f)$ is equal to its $\qtype$, which depends only on $f$ up to scaling. Thus, $s(E, \sigma_{f}) = s(E, \sigma_{-f})$, so
Lemma \ref{ouridea} together with Lemma \ref{conn} shows that the signed count is the same for all real quartics not meeting the line at infinity over $\rr$. Thus, it suffices to compute the signed count for a single such quartic.
The Fermat quartic, defined by $f = y_1^4 + y_2^4 + y_3^4$ is smooth, hence has isolated bitangents. Since the curve has no real points, we may choose any line to be the line at infinity. An elementary calculation shows that the Fermat quartic has precisely four real bitangents, defined by
\[V(y_1 + a y_2 + by_3) \qquad \text{for} \qquad a = \pm \sqrt[4]{\frac{\sqrt{5}-1}{2}}, \ b = \pm\sqrt[4]{\frac{\sqrt{5}-1}{3-\sqrt{5}}}\]
Each of these four bitangents meets the curve in a pair of complex conjugate points $p$ and $\overline{p}$, and hence they all have type
\begin{equation} \label{nsp}
\qtype_{L_\infty}(L) = \langle \partial_L f(p) \partial_L f(\overline{p}) \rangle = \langle \partial_L f(p) \overline{\partial_L f(p)} \rangle = \langle 1 \rangle.
\end{equation}
for any line $L_\infty$. Thus, the signed count of bitangents for the Fermat quartic is $4$.
\end{proof}

\subsection{Varying $L_\infty$}\label{vary}
In this section, we explore how the signed count of bitangents to a fixed quartic varies as $L_\infty$ moves.  This is equivalent to studying the signed counts with respect to a fixed line for all of the quartics in a $\PGL_3$ orbit.  

Equation \eqref{nsp} shows that non-split bitangents and hyperflexes have type $\langle 1 \rangle$ with respect to any line at infinity. On the other hand, split bitangents acquire type $\langle -1 \rangle$ and $\langle 1 \rangle$ depending on their geometry relative to the line at infinity. 

Given any plane quartic $Q \subset \pp^2_\rr$, fix a starting line at infinity $M$.
Each split bitangent $L$ to $Q$ determines a line segment $g_M(L)$ in $\aa^2_\rr = \pp^2_\rr \smallsetminus M$ called the \defi{grate of $L$ with respect to $M$}, defined by joining the two points of $L \cap Q$. For any other line $L_\infty$ with $L \cap L_\infty \cap Q = \varnothing$, we have
\[\qtype_{L_\infty}(L) = \begin{cases} \qtype_M(L) & \text{if $L_\infty \cap g_M(L) = \varnothing$} \\ -\qtype_M(L) &\text{if $L_\infty \cap g_M(L) \neq \varnothing$.} \end{cases}\]
It follows that with respect to any line $L_\infty$, the signed count is
\begin{align}\label{differences}
s_{L_\infty}(Q) = s_M(Q) &-2 \cdot \#\{\text{split bitangents $L$}:\text{$\qtype_{M}(L) = 1$ and $L_\infty \cap g_M(L) \neq \varnothing$}\} \\
&+ 2 \cdot \#\{\text{split bitangents $L$}:\text{$\qtype_{M}(L) = -1$ and $L_\infty \cap g_M(L) \neq \varnothing$}\}. \notag
\end{align}
Given a quartic $Q$, we determine all possible signed counts using the following algorithm.


\vspace{.1in}
\noindent
\begin{algo}\label{algo:all_counts} Input: equation $f$ of a smooth plane quartic. Output: set of all possible signed counts of bitangents.
\begin{enumerate}
\item Compute equations defining the bitangents to $V(f)$ by elimination. Find endpoints of grates of all split bitangents.

\item Choose any starting line $M = V(z)$ which does not meet the curve at the endpoint of any grate. Compute the $\qtype$ of each bitangent with respect to $M$.

\item The endpoints of the grate of a split bitangent defines a pair of lines in ${\pp^2}^\vee$ (intersecting in the point of ${\pp^2}^\vee$ corresponding to the bitangent). Call the collection of all such lines the \defi{dual grate arrangement} (pictured in solid blue below).  The signed count is constant for $[L_\infty]$ in the complement of the dual grate arrangement. Thus, it suffices to check the signed count with respect to a representative in each region. The vertices of dual grate arrangement (black dots below) correspond to lines joining the endpoints of grates.

\item We sample the finitely many regions of the complement of the dual grate arrangement with red test lines in ${\pp^2}^\vee$ through $[M]$ as pictured below. It suffices to check a finite collection of red lines, one in each region bounded by the dashed black lines joining $[M]$ and vertices of the dual grate arrangement.

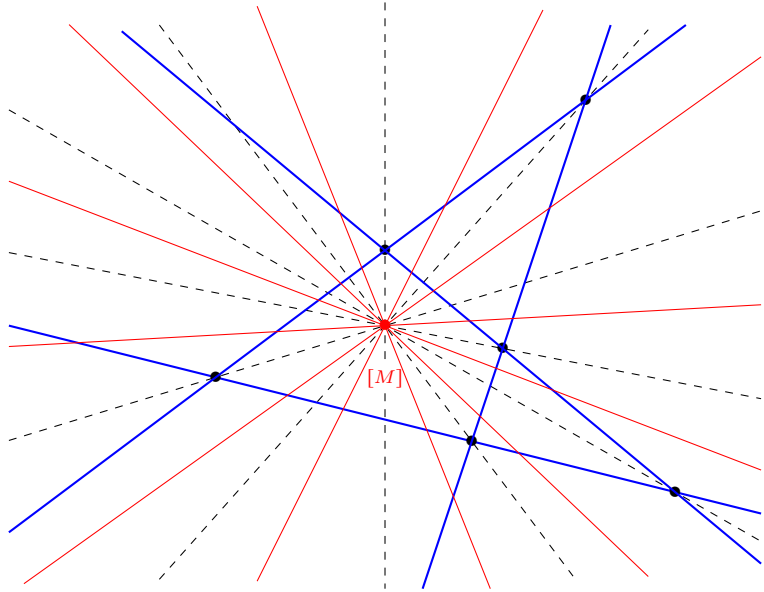
\begin{figure}
\begin{center}
\begin{tikzpicture}
\draw [dashed] (0, 4.3) -- (0, -.5);
\draw [dashed] (0, -.9) -- (0, -3.5);
\draw node at (0, 1) {$\bullet$};
 \draw[domain=-5:5,smooth,variable=\x,blue, thick] plot ({\x},{-\x/4 - 2.5/2});
 \draw node at (-2.25, 2.25/4 - 2.5/2) {$\bullet$};
 \draw[domain=-5:5,smooth,variable=\x,black,dashed] plot ({\x},{\x*(2.25/4 - 2.5/2)/(-2.25)});
\draw node at ( 3.85714285714286, -2.21428571428571) {$\bullet$};
 \draw[domain=-5:5,smooth,variable=\x,black,dashed] plot ({\x},{\x*-2.21428571428571/3.85714285714286});
\draw node at (1.15384615384615, -1.53846153846154) {$\bullet$};
 \draw[domain=-3:2.5,smooth,variable=\x,black,dashed] plot ({\x},{\x*-1.53846153846154/1.15384615384615});
\draw node at (8/3, 3) {$\bullet$};
 \draw[domain=-3:3.5,smooth,variable=\x,black,dashed] plot ({\x},{\x*3/(8/3)});
\draw node at (36/23, -7/23) {$\bullet$};
\draw [domain=-5:5,smooth,variable=\x,black,dashed] plot ({\x},{\x* -(7/23)/(36/23)});
  \draw[domain=-5:4,smooth,variable=\x,blue, thick] plot ({\x},{3*\x/4 + 1});
  \draw[domain=-3.5:5,smooth,variable=\x,blue, thick] plot ({\x},{-5/6*\x +1});
 \draw[domain=.5:3,smooth,variable=\x,blue, thick] plot ({\x},{3*\x-5});
 \draw [red] node (0, 0) {$\bullet$};
 \draw [red] node at (0, -.7) {\tiny $[M]$};
  \draw[domain=-4.2:3.5,smooth,variable=\x,red] plot ({\x},{-0.953703703703706*\x});
    \draw[domain=-5:5,smooth,variable=\x,red] plot ({\x},{-0.384259259259258*\x});
\draw[domain=-5:5,smooth,variable=\x,red] plot ({\x},{0.0555555555555556*\x});
  \draw[domain=-4.8:5,smooth,variable=\x,red] plot ({\x},{0.715277777777778*\x});
\draw[domain=-1.7:2.1,smooth,variable=\x,red] plot ({\x},{2*\x});
\draw[domain=-1.7:1.4,smooth,variable=\x,red] plot ({\x},{-2.5*\x});
\end{tikzpicture}
\caption{The dual grate arrangement and test pencils in ${\pp^2}^\vee$}
\end{center}
\end{figure}

If $M = V(z) \subset \pp^2$, then a red line in ${\pp^2}^\vee$ represents a family of parallel lines with fixed slope in the $(x, y)$ plane.   Thus it suffices to check all lines of slope $a$ for some finite and computable set of $a$.



\item As $L_\infty$ moves along lines of slope $a$, the signed count only changes when $L_\infty$ meets the endpoint of a grate (when a red line crosses a blue line above). The order that different endpoints in $\pp^2$ are hit is determined by their projection onto a line of slope $-1/a$. Sort the list of all endpoints accordingly, together with the effect they will have when crossed. The partial sums of the effects in this sorted list determine all possible signed counts attained in this pencil as in \eqref{differences}.
\end{enumerate}
\end{algo}

An implementation in Sage is available at \cite{code}.

\subsection{Example: the Trott curve}\label{trott}

The Trott curve $Q$ is given by the vanishing of the homogeneous quartic polynomial
$$f = 12^2 (x^4 + y^4) - 15^2(x^2 + y^2)z^2 + 350x^2y^2 + 81z^4.$$
Topologically, the real points of this quartic form $4$ non-nested ovals, and all $28$ bitangents are defined over $\rr$ and split.  Therefore the sign of every bitangent line depends on the choice of $L_\infty$.

Carrying out Algorithm \ref{algo:all_counts} with starting line $M = V(z)$ verifies Conjecture \ref{conj_range} for this quartic: as $L_\infty$ ranges over all real lines in $\pp^2$, $s_{L_\infty}(Q)$ always lies in the set $\{0,2,4,6,8\}$. 
Furthermore, every such count is achieved for some choice of $L_\infty$.  The colored band in Figure 2 below indicates the possible signed counts for lines $L_\infty$ in the pencil of slope $5/4$.  

\begin{figure}[ht]\label{fig:band}
\begin{center}
\begin{minipage}[l]{0.45\textwidth}
\includegraphics[width=3in]{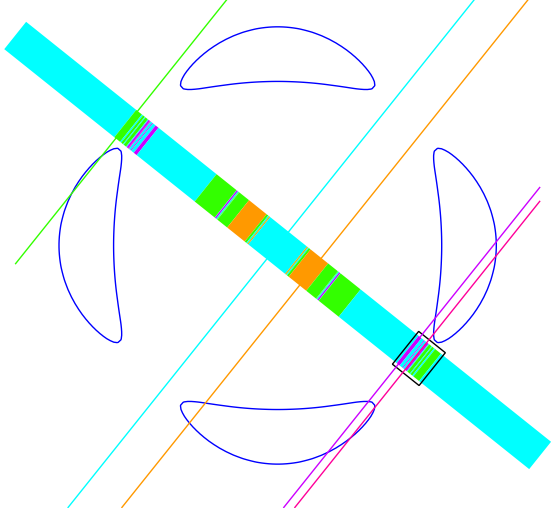}
\end{minipage}
\begin{minipage}[l]{0.31\textwidth}
\includegraphics[width=1.9in]{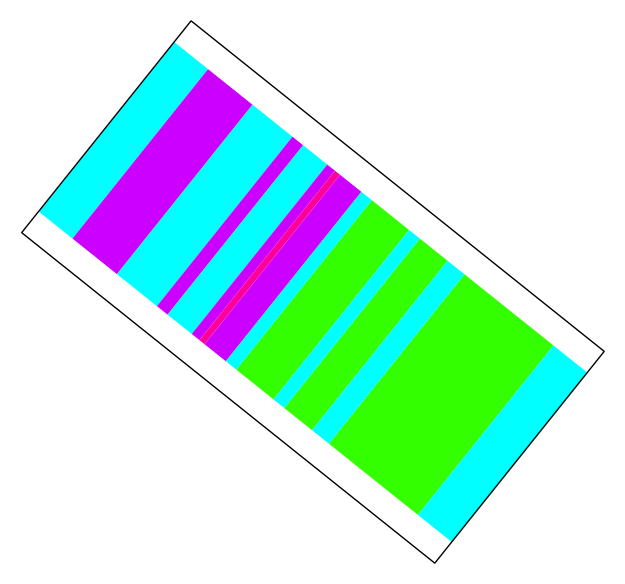}
\end{minipage}
\begin{minipage}[l]{0.03\textwidth}
\hspace{10pt}
\includegraphics[width=.2in]{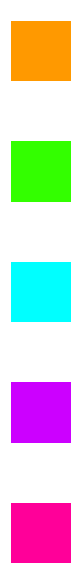}
\end{minipage}
\begin{minipage}[l]{0.18\textwidth}
$s_{L_\infty}=0$\\[8pt]
$s_{L_\infty}=2$ \\[8pt]
$s_{L_\infty}=4$ \\[8pt]
$s_{L_\infty}=6$\\[8pt]
$s_{L_\infty}=8$
\end{minipage}
\caption{The possible signed counts of bitangents of the Trott curve with respect to a line at infinity $L_\infty$ varying in the pencil of lines of slope $5/4$ 
}
\end{center}
\end{figure}

In Figure 3 we illustrate $5$ lines $L_0, L_2, L_4, L_6, L_8$ from this pencil that achieve each of the five possible signed counts. The figure shows the grates with respect to $M$ that intersect each $L_i$, and hence change sign with respect to $L_i$. Black indicates that $\qtype_M(L_i)=1$ and red indicates that $\qtype_M(L_i)=-1$.

\begin{figure}[ht]
\begin{center}
\begin{minipage}[l]{0.31\textwidth}
\begin{center}
$L_2 = V(y - 1.25x - 1.415)$\\
\includegraphics[width=2in]{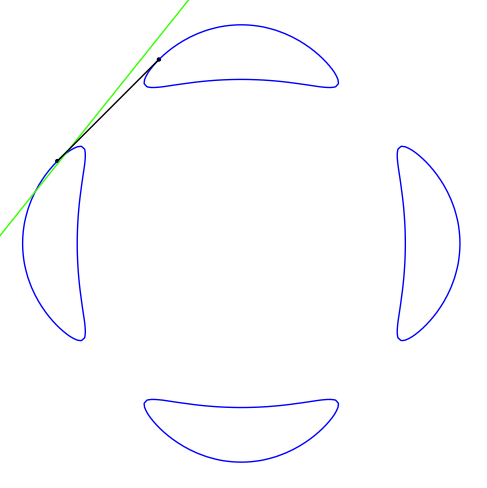}
\end{center}
\end{minipage}
\begin{minipage}[l]{0.31\textwidth}
\begin{center}
$L_4 = V(y - 1.25x)$\\
\includegraphics[width=2in]{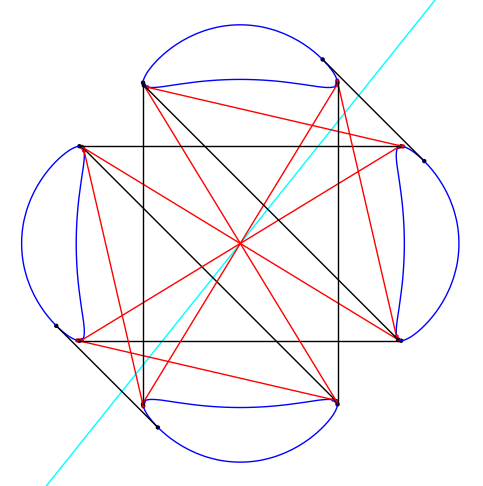}
\end{center}
\end{minipage}
\end{center}

\vspace{10pt}

\begin{minipage}[l]{0.31\textwidth}
\begin{center}
$L_0 = V(y - 1.25x + 0.3075)$\\
\includegraphics[width=2in]{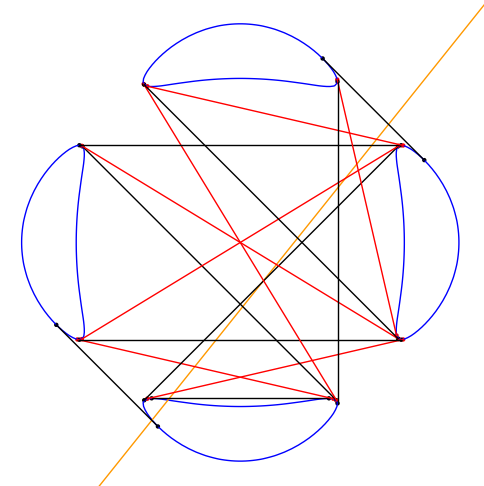}
\end{center}
\end{minipage}
\begin{minipage}[l]{0.31\textwidth}
\begin{center}
$L_6 = V(y-1.25x + 1.233)$\\
\includegraphics[width=2in]{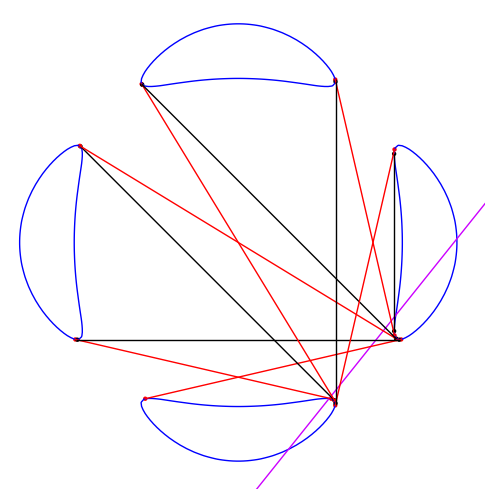}
\end{center}
\end{minipage}
\begin{minipage}[l]{0.31\textwidth}
\begin{center}
$L_8 = V(y - 1.25x + 1.296)$\\
\includegraphics[width=2in]{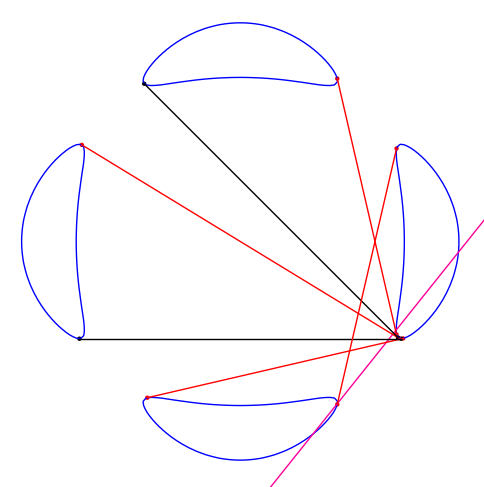}
\end{center}
\end{minipage}
\caption{Choices for $L_\infty$ achieving each of the possible signed counts}
\end{figure}


\section{Comparison with lines on smooth cubics}
In the previous sections, we gave a procedure for computing the local index relative to a line $L_\infty$ and interpreted it as a geometric type in terms of the local geometry of the quartic.
We now relate the relative type of a bitangent to the type of a line on a cubic surface.

\begin{defin}
Let $Q$ be a smooth plane quartic and let $L_\infty$ be bitangent line defined over $k$.
We say that a pointed cubic $(V, p)$ in $\pp^3_k$ is \defi{associated to $(Q, L_\infty)$} if
the projection map from $p$
\[\pi_p \colon V \dashrightarrow \pp_k^2 \]
has branch divisor $Q$ and $\pi_p(T_pV \cap V) = L_\infty$.
\end{defin}

\begin{lem} \label{existscubic}
Let $Q \subset \pp^2_k$ be a smooth plane quartic and let $L_\infty \subset \pp^2_k$ be a bitangent of $Q$ defined over $k$.  Then there exists an associated cubic $(V, p)$ defined over $k$.
\end{lem}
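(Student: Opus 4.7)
The plan is to give an explicit construction. Geometrically, the idea is that the double cover $W \to \pp^2_k$ branched along $Q$ is a smooth del Pezzo surface of degree $2$, and the preimage of the bitangent $L_\infty$ decomposes as a pair of $(-1)$-curves $E_+ \cup E_-$ meeting at the ramification points above the two tangency points. Contracting either one (say $E_+$) yields a smooth cubic surface $V$ with a distinguished point $p$ (the image of $E_+$), and projection from $p$ then recovers $(Q, L_\infty)$. I will realize this contraction as a cubic hypersurface written down directly in terms of the equation of $Q$.

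To carry this out explicitly over $k$, choose coordinates so that $L_\infty = V(y_3)$. Because $L_\infty$ is a bitangent, the scheme $L_\infty \cap Q$ equals $2Z$ for a degree $2$ divisor $Z \subset L_\infty$ defined over $k$. Letting $g \in k[y_1, y_2]$ be a homogeneous defining equation of $Z$, we have $f|_{L_\infty} = \lambda g^2$ for some $\lambda \in k^\times$; after rescaling $f$ by $\lambda^{-1}$, we may assume $f|_{L_\infty} = g^2$. Then $y_3$ divides $f - g^2$, so $f = g^2 - y_3 r$ for a unique cubic $r \in k[y_1, y_2, y_3]$. Let $V \subset \pp^3_k$ be the cubic surface cut out by
\[
F(Y_0, Y_1, Y_2, Y_3) \;:=\; Y_2 Y_3^2 - 2\, g(Y_0, Y_1)\, Y_3 + r(Y_0, Y_1, Y_2),
\]
and take $p := [0:0:0:1] \in V(k)$.

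The conditions of the definition then reduce to short checks. Projection from $p$ is the linear map $[Y_0:Y_1:Y_2:Y_3] \mapsto [Y_0:Y_1:Y_2]$, and the fiber over $[y_1:y_2:y_3]$ consists of the roots in $Y_3$ of $y_3 Y_3^2 - 2 g(y_1,y_2)\, Y_3 + r(y_1,y_2,y_3)$, whose discriminant is $4(g^2 - y_3 r) = 4 f$; hence the branch divisor is $Q$. Computing partials of $F$ at $p$ shows $T_p V = V(Y_2)$, which projects onto $V(y_3) = L_\infty$. Smoothness of $V$ follows from smoothness of $Q$: one has $\partial_{Y_2} F|_p = 1 \neq 0$, while $V \setminus \{p\}$ is identified with $W$ minus the $(-1)$-curve $E_+ = \{w = g, y_3 = 0\}$, which is smooth because $Q$ is. The one step requiring care is the rescaling that puts $g$ itself (not merely $g^2$) in $k[y_1, y_2]$: without this, $g$ would live only in $k(\sqrt{\lambda})[y_1,y_2]$ when $\lambda$ is a non-square, and correspondingly $E_+$ and $E_-$ would be Galois-conjugate rather than individually $k$-rational, blocking a $k$-rational contraction of either single curve.
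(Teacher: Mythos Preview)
Your argument is correct and uses the same underlying geometry as the paper, but the execution is genuinely different: the paper works abstractly with the del Pezzo surface $\tilde{V}=\{w^2=f\}$ and realizes the blow-down of one exceptional curve $E_i$ via the linear system $|-K_{\tilde{V}}+E_i|$, then reads off the required properties from the factorization $H^0(-K_{\tilde V})\hookrightarrow H^0(-K_{\tilde V}+E_i)$. You instead write the cubic down directly as $F=Y_2Y_3^2-2g(Y_0,Y_1)Y_3+r(Y_0,Y_1,Y_2)$ with $p=[0{:}0{:}0{:}1]$, after the same key normalization $f=g^2-y_3 r$, and verify everything by hand. Your route is more elementary and has the pleasant feature that it exhibits explicit coordinates in which the subsequent Lemma relating $\qtype$ to the type of a line can be checked; the paper's route is more conceptual and makes clearer why two (and only two) such $(V,p)$ arise, corresponding to the choice of $E_1$ or $E_2$.

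One spot that deserves another sentence is the smoothness of $V$. Your assertion that $V\smallsetminus\{p\}\cong W\smallsetminus E_+$ is true, but you do not give the map; a reader might balk since $\pi_p$ is only generically of degree $2$ (it has degree $1$ over $L_\infty$). The cleanest fix is either to exhibit the isomorphism explicitly on $\{Y_2\neq 0\}$ via $w=g(Y_0,Y_1)-Y_2Y_3$ and then handle the hyperplane $Y_2=0$ separately, or simply to do the Jacobian computation: if all partials of $F$ vanish at $q=[a_0{:}a_1{:}a_2{:}a_3]$, one finds $a_2a_3=g$, $a_3^2=-r_{Y_2}$, and combining with $F(q)=0$ forces $f$ and all its partials to vanish at $[a_0{:}a_1{:}a_2]$, contradicting smoothness of $Q$ unless $q=p$; and at $p$ you already observed $\partial_{Y_2}F=1$. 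The same computation shows $r(Y_0,Y_1,0)\neq 0$ on $V(g)$, so no line of $V$ passes through $p$, which is implicitly needed for $\pi_p$ to be the expected double cover.
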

\begin{proof}
Choose a homogeneous polynomial $f(x,y,z)$ of degree $4$ defining $Q$ such that $f|_{L_\infty}$ is a square (this is well-defined up to multiplication by elements of $(k^*)^2$).  Let $\tilde{V}$ be the double cover of $\pp^2_k$ specified in weighted projective space $\pp(2,1,1,1)_{wxyz}$ by the equation
\begin{equation}\label{doublecover} w^2 = f(x,y,z). \end{equation}
The surface $\tilde{V}$ is a del Pezzo of degree $2$, and the double cover map $\phi _{-K_{\tilde{V}}} \colon \tilde{V} \to \pp^2_k$ is induced by the complete linear system $|-K_{\tilde{V}}|$.
As $f|_{L_\infty}$ is a square, the preimage of $L_\infty$ in $\tilde{V}$ splits as two $k$-divisors $E_1$ and $E_2$ intersecting above the points at which $L_\infty$ is tangent to $Q$.  For either choice of $i=1$ or $2$, the image of $(\tilde{V}, E_i)$ under the linear system $|-K_{\tilde{V}} + E_i|$ is a pointed smooth cubic surface $(V, p)$ in $\pp^3_k = \pp H^0(\tilde{V}, -K_{\tilde{V}} + E_i)^\vee.$  The subspace $H^0(\tilde{V}, -K_{\tilde{V}}) \subseteq H^0(\tilde{V}, -K_{\tilde{V}} + E_i)$ induces a linear map $\pp H^0(\tilde{V}, -K_{\tilde{V}} + E_i)^\vee \dashrightarrow \pp H^0(\tilde{V}, -K_{\tilde{V}})^\vee$, which is projection from the $1$-dimensional quotient $H^0(E_i, (-K_{\tilde{V}} + E_i)|_{E_i}) \simeq H^0(E_i, \O_{E_i})$, i.e., the point $p = \phi _{-K_{\tilde{V} + E_i}}(E_i)$.
In other words, the composite map
\[\tilde{V} \to \pp H^0(\tilde{V}, -K_{\tilde{V}} + E_i)^\vee \dashrightarrow \pp H^0(\tilde{V}, -K_{\tilde{V}})^\vee \]
is both the original double cover map $\phi _{-K_{\tilde{V}}}$ away from $E_i$, and projection of $\phi _{-K_{\tilde{V} + E_i}}(\tilde{V})$ from $p \in \pp H^0(\tilde{V}, -K_{\tilde{V}} + E_i)^\vee$.  Hence the branch divisor is the quartic curve $Q$, as desired.  Furthermore, the image of $E_j$ for $j \neq i$ in $\pp H^0(\tilde{V}, -K_{\tilde{V}} + E_i)^\vee$ is a curve of degree $3 = (-K_{\tilde{V}} + E_i) \cdot E_j$ with multiplicity $2 = E_i \cdot E_j$ at $p$.  Any such curve is necessarily planar, and therefore $\phi _{-K_{\tilde{V} + E_i}} (E_j)$ must be the tangent plane section $T_pV \cap V$.  The image of $E_j$ in $\pp H^0(\tilde{V}, -K_{\tilde{V}})^\vee \simeq \pp^2_k$ is evidentally the bitangent $L_\infty$.
\end{proof}

\begin{rem}
In the proof of Lemma \ref{existscubic}, we chose the unique twist of \eqref{doublecover} such that the preimage of $L_\infty$ on the double cover splits into two exceptional curves.  This is essential so that we may blow down just one of them to obtain a cubic surface.
\end{rem}

 Each of the remaining $27$ bitangent lines to $Q$ corresponds to a unique line on a cubic surface $V$, which therefore has the same field of definition. We will show that the type of the bitangent line relative to $L_\infty$ is equal to the type of the corresponding line on an associated cubic surface. Recall that if $L \subset V$ is a line on a cubic surface, then projection from $L$ restricts to a degree $2$ cover $L \rightarrow \pp^1$, whose associated involution we denote $\iota: L \rightarrow L$. Kass--Wickelgren define the \defi{type} of $L \subset V$, denoted $\type(L)$, to be the class in $\GW(k(L))$ of the discriminant of the fixed locus of $\iota$.
 
 \begin{lem} \label{sametype}
Suppose that $Q$ is a smooth plane quartic and $L_\infty$ is a bitangent to $Q$ defined over $k$. Let $(V, p)$ be an associated cubic. For each bitangent $L \neq L_\infty$ to $Q$ defined over $K$, we have
\[\qtype_{L_\infty}(L) = \type(\tilde{L}) \in \GW(K),\]
where $\tilde{L} \subset V$ is the unique line such that $\pi_p(\tilde{L}) = L$.
\end{lem}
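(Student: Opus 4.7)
The plan is to compute $\type(\tilde L)$ explicitly in the coordinates of Lemma \ref{indlem} and match it against $\qtype_{L_\infty}(L) = \langle \partial_L f(z_1)\partial_L f(z_2)\rangle$ as given there. By definition, $\type(\tilde L)$ is the discriminant of the ramification divisor of the degree-$2$ map $\tilde L \to \pp^1$ sending $\tilde q \mapsto T_{\tilde q}V$, where $\pp^1$ parameterizes the pencil of planes in $\pp^3$ through $\tilde L$; so the strategy is to produce a concrete basis of this pencil, restrict it to $\tilde L \cong \pp^1$ to obtain two binary quadratic forms $p_1, p_2$, and compute the discriminant of the Jacobian of $[y_2:y_3]\mapsto[p_1:p_2]$.

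I would begin from the construction of Lemma \ref{existscubic}: with $L_\infty = V(y_3)$, the cubic $V$ is obtained from the double cover $\phi\colon \tilde V\colon w^2 = f\to\pp^2$ by blowing down the component $E_1\colon\{y_3 = 0,\, w = g_\infty(y_1,y_2)\}$ of $\phi^{-1}(L_\infty)$, normalizing the square root so that $g_\infty(0,y_2) = y_2^2$. The preimage $\phi^{-1}(L)$ splits as $\ell^+ \cup \ell^-$ meeting at the ramification points $\tilde z_1,\tilde z_2$ above $z_1, z_2$; the intersection identity $E_1\cdot(\ell^+ + \ell^-) = E_1\cdot(-K) = 1$ forces exactly one of $\ell^\pm$ to be disjoint from $E_1$ and so to descend isomorphically to $\tilde L$ on $V$, and we take $\tilde L = \ell^-$. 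The pencil of planes through $\tilde L$ pulls back to the two-dimensional subspace of $H^0(\tilde V,\O(-K+E_1))$ of sections vanishing on $\tilde L$: one element is $y_1$ (whose zero divisor contains $\tilde L \subset \phi^{-1}(L)$), and a second comes from the fourth generator $x_3 = (w+g_\infty)/y_3$ of $H^0(\O(-K+E_1))$: a direct check shows $x_3|_{\tilde L} = -\alpha\,y_3|_{\tilde L}$, so $x_3 + \alpha y_3$ vanishes on $\tilde L$ and is linearly independent from $y_1$.

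Restricting these two sections to $\tilde L \cong \pp^1$ parameterized by $[y_2:y_3]$, the section $y_1$ produces $p_1 = y_2^2 + \alpha y_3^2$ (its zeros at $\tilde z_1, \tilde z_2$ are predicted by $(\ell^+ + E_1)\cdot\tilde L = 2$), and expansion of $x_3 + \alpha y_3$ to first order in $y_1$ along $\tilde L$, using the expansion of $f$ recorded in the proof of Lemma \ref{indlem}, yields (after correctly accounting for trivializations) $p_2 = -c_{1,2,1}y_2^2 + (c_{1,3,0}\alpha - c_{1,1,2})y_2 y_3 - c_{1,0,3}y_3^2$. The discriminant of the Jacobian of $[y_2:y_3]\mapsto[p_1:p_2]$ is then a scalar multiple of $c_{1,3,0}^2\alpha^3 + (c_{1,2,1}^2 - 2c_{1,3,0}c_{1,1,2})\alpha^2 + (c_{1,1,2}^2 - 2c_{1,0,3}c_{1,2,1})\alpha + c_{1,0,3}^2$, matching the formula for $\partial_L f(z_1)\partial_L f(z_2)$ from Lemma \ref{indlem} up to squares. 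The delicate step I anticipate is the extraction of $p_2$: the naive affine trivialization with $s_{\tilde L} = y_1$ produces a rational function with denominator $y_2^2 + \alpha y_3^2$ because $y_1$ fails to cut out $\tilde L$ transversely at the branch points $\tilde z_i$, so one must clear this denominator by multiplying by $p_1$ (a change of trivialization of $\O_{\tilde L}(2)$) to obtain an honest polynomial.
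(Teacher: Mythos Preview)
Your argument is correct and takes a genuinely different route from the paper's. The paper works in the opposite direction: it chooses coordinates on $\pp^3$ adapted to the pointed cubic $(V,p)$ and the line $\tilde L$, writes the generic cubic equation subject to those constraints, and invokes \cite[Lemma~50]{kass_wickelgren} to express $\type(\tilde L)$ as an explicit $4\times 4$ determinant $M$ in the cubic's coefficients. It then computes the branch quartic $f = B^2 - 4AC$ of projection from $p$, finds the bitangency points $z_1,z_2$ on $L = V(y_1)$, and checks by direct expansion that $\partial_L f(z_1)\,\partial_L f(z_2)$ equals $M$ times an explicit square. Your approach instead stays in the quartic coordinates of Lemma~\ref{indlem}, builds the cubic through the del Pezzo double cover of Lemma~\ref{existscubic}, and computes $\type(\tilde L)$ straight from its definition as the discriminant of the ramification of $\tilde L\to\pp^1$. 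The paper's route is shorter once one accepts \cite[Lemma~50]{kass_wickelgren} as a black box; yours is self-contained and makes transparent why the same cubic polynomial in $\alpha$ with coefficients in the $c_{1,*,*}$ governs both invariants, at the cost of the bookkeeping you correctly anticipate around the branch points. One phrasing to tighten: multiplying by $p_1$ is not literally a change of trivialization of $\O_{\tilde L}(2)$ (it vanishes at $\tilde z_1,\tilde z_2$); what is really happening is that $y_1$ fails to be a local equation for $\ell^-$ near those points, and passing from the naive ratio $[1:(x_3+\alpha y_3)/y_1]$ to $[p_1:p_2]$ simply recovers the well-defined map in homogeneous form.
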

\begin{proof}
Given a pointed cubic surface $p \in V = V(F)$, one can recover the equation of the corresponding quartic explicitly, allowing us to relate our two notions of type. Fix some $\tilde{L} \subset V$. Our assumption that $L \neq L_\infty$ means that $p \notin \tilde{L}$.
We may choose coordinates $[x_0, x_1, x_2, x_3]$ on $\pp_K^3$ so that
\begin{equation} \label{conditions}
p = [1, 0, 0, 0], \qquad T_p V = V(x_3), \qquad \text{and} \qquad L = V(x_0, x_1).
\end{equation}
With respect to these coordinates, our cubic equation has the form
\[F = \sum_{i+j+k+\ell = 3} a_{i,j,k,l} x_0^i x_1^j x_2^k x_3^\ell,\]
and the conditions in \eqref{conditions} imply
\[a_{3,0,0,0} = a_{2,1,0,0} = a_{2,0,1,0} = a_{0,0,3,0} = a_{0,0,2,1} = a_{0,0,1,2} = a_{0,0,0,3} = 0.\]
By \cite[Lemma 50]{kass_wickelgren}, the type of the line $\tilde{L}$ is $\langle M \rangle$ where 
\[M = \det \left(\begin{matrix} a_{1,0,2,0} & 0 & a_{0, 1, 2, 0} & 0 \\
a_{1, 0, 1, 1} & a_{1, 0, 2, 0} & a_{0, 1, 1, 1} & a_{0, 1, 2, 0} \\
a_{1, 0, 0, 2} & a_{1, 0, 1, 1} & a_{0, 1, 0, 2} & a_{0, 1, 1, 1} \\
0 & a_{1, 0, 0, 2} & 0 & a_{0, 1, 0, 2}
 \end{matrix} \right). \]
The lines through $p$ in $\pp^3$ are parametrized by a $\pp^2$ with coordinates $[y_1, y_2, y_3]$ where
\[[y_1, y_2, y_3] \leftrightarrow \{[s, ty_1, ty_2, ty_3] : [s, t] \in \pp^1\}.\]
The restriction of $F$ to one of these lines is given by
\[\sum_{i+j+k+\ell = 3} a_{i,j,k,l} s^i (ty_1)^j (ty_2)^k (ty_3)^\ell = t(s^2 A + stB + t^2 C),\]
where 
\begin{align*}
A &= a_{2,0,0,1}y_3\\
B &= a_{1,2,0,0}y_1^2 + a_{1,1,1,0}y_1y_2 + a_{1,1,0,1}y_1y_3 + a_{1,0,2,0}y_2^2+ a_{1,0,1,1}y_2y_3 + a_{1,0,0,2}y_3^2 \\
C &= a_{0,1,2,0}y_1y_2^2 + a_{0,1,1,1}y_1y_2y_3 + a_{0,1,0,2}y_1y_3^2 + a_{0,2,1,0}y_1^2y_2 + a_{0,2,0,1}y_1^2y_3 + a_{0,3,0,0}y_1^3.
\end{align*}
The branch divisor on $\pp^2$ is the locus where the residual quadratic $s^2 A + stB + t^2 C$ has a double root. Thus, the quartic is given by the vanishing of the equation $f = B^2 - 4AC$. The image of $\tilde{L} \subset V$ is the line $V(y_1) \subset \pp^2$,
which one readily checks is a bitangent to $V(f) \subset \pp^2$. Indeed, substituting $y_1 = 0$ into $f$ gives the quartic $(a_{1,0,2,0}y_2^2 + a_{1,0,1,1}y_2y_3 + a_{1,0,0,2}y_3^2)^2$. Thus, the tangency subscheme of $V(f)$ along $V(y_1)$ is $z_1+ z_2$ where
\[z_1=[0, -a_{1,0,1,1} + d, 2a_{1,0,2,0}] \qquad \text{and} \qquad z_2=[0, -a_{1,0,1,1} - d, 2a_{1,0,2,0}] \]
with $d^2 = a_{1,0,1,1}^2 - 4a_{1,0,2,0}a_{1,0,0,2}$. Explicit computation shows that
\[\frac{\partial f}{\partial y_1}(z_1) \cdot \frac{\partial f}{\partial y_1}(z_2) = 1024 a_{2,0,01}^2 a_{1, 0, 2, 0}^4 \cdot M.\]
Since the two quantities differ by a square, they are equal in the Grothendieck-Witt group of $K$. 
In other words,
\[\qtype_{L_\infty}(L) = \qtype_{\pi_p(T_pV)}(\pi_p(\tilde{L})) = \type(\tilde{L}).\qedhere\]
\end{proof}

Recall that the $\qtype$ of a line with residue field a non-trivial extension of $k$ is defined to be the $\qtype$ of some representative line defined over $k(L)$.
\begin{cor} \label{tr}
For any bitangent line $L$ of $(Q, L_\infty)$ with associated cubic $(V, p)$, we have
\[\Tr_{k(L)/k} \qtype_{L_\infty}(L) = \Tr_{k(\tilde{L})/k} \type(\tilde{L})\]
where $\tilde{L}$ is the unique line on $V$ such that $\pi_p(\tilde{L}) = L$.
\end{cor}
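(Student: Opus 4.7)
The plan is to bootstrap Lemma \ref{sametype}, which already gives the pointwise equality $\qtype_{L_\infty}(L') = \type(\tilde L')$ in $\GW(K)$ for $K$-rational lines $L'$, to closed points with arbitrary residue field by taking traces. The only real task is to check that the geometric correspondence $L \leftrightarrow \tilde L$ respects residue fields, so that the two sides of the claimed identity are traces of the same element of $\GW(K)$.

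First, I would verify that $k(L) = k(\tilde L)$ as extensions of $k$. This reduces to the observation that every step of the construction in Lemma \ref{existscubic} is carried out over $k$: the equation $w^2 = f(x,y,z)$ defines $\tilde V$ over $k$; the decomposition $\pi^{-1}(L_\infty) = E_1 \cup E_2$ is $k$-rational because $f|_{L_\infty}$ is chosen to be a square in $k[L_\infty]$; and the complete linear system $|-K_{\tilde V} + E_i|$ defining the blow-down to $(V,p)$ is $k$-rational. Hence $\pi_p$ induces a $\Gal(\bar k/k)$-equivariant bijection between the $27$ lines on $V_{\bar k}$ and the $27$ bitangents of $Q_{\bar k}$ other than $L_\infty$, so Galois orbits match and the closed points $[L] \in (\pp^2_k)^\vee$ and $[\tilde L] \in \gg(1,3)_k$ share a common residue field $K$.

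Next, set $K = k(L) = k(\tilde L)$ and choose any $K$-rational representative $L'$ in the base-change of $L$ to $K$. Let $\tilde L'$ be the unique $K$-rational line on $V_K$ with $\pi_p(\tilde L') = L'$; by Galois-equivariance this is the representative of $\tilde L$ paired with $L'$. Applying Lemma \ref{sametype} over the field $K$ to $(Q_K, L_{\infty,K})$ with associated cubic $(V_K, p)$ yields the identity
\[\qtype_{L_\infty}(L') = \type(\tilde L') \in \GW(K).\]
Applying $\Tr_{K/k}$ to both sides, the left-hand side becomes $\Tr_{k(L)/k} \qtype_{L_\infty}(L)$ by the convention at the end of Definition \ref{qdef}, and the right-hand side becomes $\Tr_{k(\tilde L)/k} \type(\tilde L)$ by the analogous Kass--Wickelgren convention for closed points. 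This gives the corollary.

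I do not anticipate a substantive obstacle: the content is purely bookkeeping, with the only nontrivial point being the Galois-equivariance of $L \leftrightarrow \tilde L$, which is immediate from the $k$-rationality of the construction in Lemma \ref{existscubic}.
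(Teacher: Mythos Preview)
Your proposal is correct and is precisely the argument the paper intends: the corollary is stated without an explicit proof because it is meant to follow immediately from Lemma \ref{sametype} by taking $\Tr_{K/k}$, together with the observation that $k(L)=k(\tilde L)$ since the construction of $(V,p)$ and $\pi_p$ is $k$-rational. Your write-up simply makes explicit the bookkeeping about residue fields and Galois-equivariance that the paper leaves to the reader.
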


\begin{proof}[Proof of Theorem \ref{main}]
Let $(V,p)$ be a pointed cubic associated to $(Q, L_\infty)$. Summing over bitangents to $Q$ and applying Corollary \ref{tr}, the main theorem of Kass--Wickelgren \cite[Thm. 2]{kass_wickelgren} now shows
\begin{align*}
\sum_{\substack{\text{lines } L \text{ bitangent to } Q \\ L \neq L_\infty}} \Tr_{k(L)/k} (\qtype_{L_\infty}(L)) &= \sum_{\text{lines } \tilde{L} \subset V} \Tr_{k(L)/k} \type(\tilde{L}) \\
&=15\langle 1 \rangle + 12\langle -1 \rangle \in \GW(k). \qedhere
\end{align*}
\end{proof}

\bibliographystyle{plain}
\bibliography{bitangents}

\begin{thebibliography}{10}

\bibitem{top}
R.~Benedetti and R.~Silhol.
\newblock {${\rm Spin}$} and {${\rm Pin}^-$} structures, immersed and embedded
  surfaces and a result of {S}egre on real cubic surfaces.
\newblock {\em Topology}, 34(3):651--678, 1995.

\bibitem{caporaso_sernesi}
Lucia Caporaso and Edoardo Sernesi.
\newblock Recovering plane curves from their bitangents.
\newblock {\em J. Algebraic Geom.}, 12(2):225--244, 2003.

\bibitem{FK}
Sergey Finashin and Viatcheslav Kharlamov.
\newblock Abundance of real lines on real projective hypersurfaces.
\newblock {\em Int. Math. Res. Not. IMRN}, (16):3639--3646, 2013.

\bibitem{jacobi}
C.~G.~J. Jacobi.
\newblock Beweis des {S}atzes dass eine {C}urve n{$^{ten}$} {G}rades im
  {A}llgemeinen {$1/2n(n-2)(n^2-9)$} {D}oppeltangenten hat.
\newblock {\em J. Reine Angew. Math.}, 40:237--260, 1850.

\bibitem{kass_wickelgren}
Jesse Kass and Kirsten Wickelgren.
\newblock An arithmetic count of lines on a smooth cubic surface.
\newblock Preprint available at \url{https://arxiv.org/abs/1708.01175}, 2017.

\bibitem{Klein}
Felix Klein.
\newblock Eine neue {R}elation zwischen den {S}ingularit\"{a}ten einer
  algebraischen {C}urve.
\newblock {\em Math. Ann.}, 10(2):199--209, 1876.

\bibitem{code}
Hannah Larson and Isabel Vogt.
\newblock Sage implementation of {A}lgorithm 4.2.
\newblock \url{https://github.com/ivogt161/RealBitangents}, 2019.

\bibitem{McKean}
Stephen McKean.
\newblock An arithmetic enrichment of {B}\'{e}zout's theorem.
\newblock In preparation, 2019.

\bibitem{Milnor}
John~W. Milnor.
\newblock {\em Topology from the differentiable viewpoint}.
\newblock Based on notes by David W. Weaver. The University Press of Virginia,
  Charlottesville, Va., 1965.

\bibitem{OT}
Christian Okonek and Andrei Teleman.
\newblock Intrinsic signs and lower bounds in real algebraic geometry.
\newblock {\em J. Reine Angew. Math.}, 688:219--241, 2014.

\bibitem{tropical}
Daniel Plaumann, Bernd Sturmfels, and Cynthia Vinzant.
\newblock Quartic curves and their bitangents.
\newblock {\em J. Symbolic Comput.}, 46(6):712--733, 2011.

\bibitem{Reich}
Zinovy~B. Reichstein.
\newblock On a property of real plane curves of even degree.
\newblock {\em Canad. Math. Bull.}, 62(1):179--182, 2019.

\bibitem{Ronga}
Felice Ronga.
\newblock Felix {K}lein's paper on real flexes vindicated.
\newblock In {\em Singularities {S}ymposium---Lojasiewicz 70 ({K}rak\'{o}w,
  1996; {W}arsaw, 1996)}, volume~44 of {\em Banach Center Publ.}, pages
  195--210. Polish Acad. Sci. Inst. Math., Warsaw, 1998.

\bibitem{salmon}
George Salmon.
\newblock {\em A treatise on the higher plane curves: intended as a sequel to
  ``{A} treatise on conic sections''}.
\newblock 3rd ed. Chelsea Publishing Co., New York, 1960.

\bibitem{Viro}
O.~Ya. Viro.
\newblock Some integral calculus based on {E}uler characteristic.
\newblock In {\em Topology and geometry---{R}ohlin {S}eminar}, volume 1346 of
  {\em Lecture Notes in Math.}, pages 127--138. Springer, Berlin, 1988.

\bibitem{Wall1}
C.~T.~C. Wall.
\newblock Duality of real projective plane curves: {K}lein's equation.
\newblock {\em Topology}, 35(2):355--362, 1996.

\bibitem{Wall2}
C.~T.~C. Wall.
\newblock {\em Singular points of plane curves}, volume~63 of {\em London
  Mathematical Society Student Texts}.
\newblock Cambridge University Press, Cambridge, 2004.

\end{thebibliography}

\end{document}